\definecolor{red}{rgb}{1,0,0}
\definecolor{green}{rgb}{0,1,0}
\definecolor{blue}{rgb}{0,0,1}
\definecolor{refkey}{gray}{.625}
\definecolor{labelkey}{gray}{.625}
\newtheorem{thmletter}{Theorem}
\theoremstyle{plain}
\newtheorem{thm}{\protect\theoremname}[section]
\newtheorem{cor}[thm]{\protect\corollaryname}
\newtheorem{lem}[thm]{\protect\lemmaname}
\theoremstyle{definition}
\newtheorem{defn}[thm]{\protect\definitionname}
\newtheorem{notation}[thm]{\protect\notationname}
  \providecommand{\corollaryname}{Corollary}
  \providecommand{\examplename}{Example}
  \providecommand{\lemmaname}{Lemma}
  \providecommand{\propositionname}{Proposition}
  \providecommand{\theoremname}{Theorem}
  \providecommand{\definitionname}{Definition}
  \providecommand{\remarkname}{Remark}
  \providecommand{\notationname}{Notation}
  \newcommand{\be}{%
  \begingroup
  \eqnarray%
   \@ifstar{\nonumber}{}%
  }
\newcommand{\GL}{\mathrm{GL}}
\newcommand{\id}{\mathrm{Id}}
\newcommand{\dx}{\dot{\mathbf{x}}_0}
\newcommand{\x}{\mathbf{x}_0}
\newcommand{\A}{\mathcal{A}}
\newcommand{\e}{\mathbf{e}}
\newcommand{\F}{\mathcal{F}}
\newcommand{\G}{\mathcal{G}}
\newcommand{\Gal}{\operatorname{Gal}}
\newcommand{\Cl}{\operatorname{Cl}}
\newcommand{\n}{\mathfrak{n}}
\newcommand{\p}{\mathfrak{p}}
\newcommand{\floor}[1]{\lfloor #1 \rfloor}
\newcommand{\Prim}{\operatorname{Prim}}
\newcommand{\Ihara}{\operatorname{Ihara}}
\newcommand{\LT}{\operatorname{LT}}
\begin{document}

\title{New optimal function field towers over finite fields of quartic power}
%\thanks{ Research partially supported by NSFC grants
%	12071247/12101616(Hu),  Guangdong Basic and Applied Basic Research Foundation No. 2021A1515110654(Huang) and
% the Basic and Applied Basic Research of Guangzhou Basic Research Program No.
%202201010234(Huang) .}
%\thanks{$^*$ The corresponding author.}
%
\author{Chuangqiang Hu}
\address{Sun Yat-Sen University, School of Mathematical, Guangzhou, China}
\email{\href{huchq@mail2.sysu.edu.cn}{huchq@mail2.sysu.edu.cn}}
%
%\author{Xiao-Min Huang$^*$}%{$^\diamond $}\\
%\address{School of Mathematics and statistics,
%Guangdong University of Technology, Guangzhou, Guangdong 510006,
%Guangdong, China}
%\email{\href{mahuangxm@gdut.edu.cn}{mahuangxm@gdut.edu.cn}}

\author{Xiuwu Zhu}
\address{Beijing Institute of Mathematical Sciences and Applications, Beijing, China}
\email{\href{xwzhu@bimsa.cn}{xwzhu@bimsa.cn}}

\allowdisplaybreaks

\begin{abstract}
We introduce two new types of towers of Drinfeld modular curves. These towers originate from a specific domain $\A $ and are analogous to the towers of rank-two Drinfeld modular curves over the polynomial ring. Specifically, the domain $\A $ corresponds to the projective line over the finite field $ \mathbb{F}_q $, equipped with an infinite place of degree two. We select an arbitrary non-zero principal $\A $-ideal $ I_{\eta} $ of degree two. Notably, the $ I_{\eta} $-reduction of the tower of minimal Drinfeld modular curves is asymptotically optimal over the finite field $ \mathbb{F}_{q^4} $.
\end{abstract}
\maketitle{}

\textbf{Keywords:}  Drinfeld module; Drinfeld modular curve; Isogeny; Ihara quantity
%
%	{\textbf{AMS subject classification:} 58A50, 17B70, 16E45, 53C05, 53C12. }
%

%% \tableofcontents
 
\section{Introduction}
\subsection{Ihara's quantity}
Let $\mathbb{F}_q$ denote the finite field of cardinality $q$. Let $F $ be a function field over $\mathbb{F}_q$ with genus $g(F)$.  
Estimating the number of rational places of $F $ is an important topic in number theory and algebraic geometry.
The Hasse-Weil bound \cite{MR29522} states that the number of rational places of $F$ satisfies the inequality 
\[
    N(F) \leqslant q+1 + 2\sqrt{q} g(F). 
\]  
An improved bound was obtained by Serre  \cite{Serre1984}:
 \[
N(F) \leqslant  q + 1 + g(F) \floor{2 \sqrt{q}}.
\] 
A function field $F / \mathbb{F}_q $ that achieves the Hasse-Weil bound is called maximal. For a comprehensive overview, the interested reader
is referred to \cites{Cakcak2004, Cakcak2005, MR2660419, MR2448446, MR2239917, MR2464941}
 for standard examples, and to \cites{MR3893192, Skabelund2018} for recent progress on maximal curves.

To investigate the asymptotic behavior for function fields over $\mathbb{F}_q$, 
Ihara \cite{Ihara1982} introduced the quantity 
\[
\Ihara(q) := \limsup_{g(F) \to \infty } \frac{N(F)}{g(F)},
\]
where $ F $ ranges over the function fields over $\mathbb{F}_q$.
 Due to Serre's lower bound \cite{Serre1983} and the Drinfeld-Vladut upper bound \cite{Drinfeld695100}, it is now well-known that 
  \[
0 < \Ihara(q)  \leqslant \sqrt{q}-1.
\]

In search of lower bounds for $\Ihara(q)$, researchers have invented various constructions of function field towers over $\mathbb{F}_q$. Roughly speaking,
a function field tower $\F_* $ means a sequence
of successive inclusions 
\[
    \begin{tikzcd}
       \F_0 \ar[r] &  \F_1\ar[r] &  \F_2\ar[r] &  \F_3\ar[r] &  \F_4\ar[r] &  \cdots,
   \end{tikzcd}
\]
of function fields over $ \mathbb{F}_q $ with $g(\F_n) \to \infty $ when $n \to \infty$. 

The limit
\[ \lambda(  \F_* ) = \lim_{n\to \infty} \frac{N(\F_n)}{g(\F_n)}  \]
 certainly gives a lower bound of $ \Ihara(q)$.
A function field tower $ \F_*$ is called asymptotically good if $\lambda(  \F_* ) > 0 $ and called asymptotically optimal if $\lambda(  \F_* )$ verifies the Drinfeld-Vladut bound. Notice that it is not generally easy to construct good towers.

When $q$ is a square, a sharp lower bound for Ihara's quantity is established independently: $\Ihara(q) \geqslant \sqrt{q}-1$  (hence $\Ihara(q) = \sqrt{q}-1$). Specifically, Ihara \cite{Ihara1982} used families of Shimura modular curves to derive this result, whereas Tsfasman, Vlăduț, and Zink \cite{MR705893} relied on families of classical modular curves. A key limitation of both approaches, however, is that the modular curves involved are not explicit. To address this lack of explicitness, Garcia and Stichtenoth \cite{Garcia1995} later constructed an explicit optimal sequence of function fields over \(\mathbb{F}_{q^2}\):
\[ \{ \F_n := \mathbb{F}_{q}(x_1,\cdots, x_n) \}  . \]
This tower is defined by a recursive condition on its variables $x_n$:
\[
    \frac{x_{n+1}}{x_n^q} +  \frac{x_{n+1}^q}{x_n} = 1.  
\]
In a subsequent work \cite{Garcia1996}, they extended this line of research by introducing a second explicit tower of function fields, governed by the recursive relation:
\[x_{n+1}^q + x_{n+1} = \frac{x_n^q}{x_n^{q-1} + 1}.\]
Later, Elkies \cite{MR1905359} established a key connection: the function field towers (and their associated curves) constructed by Garcia and Stichtenoth are in fact Drinfeld modular curves. This pattern extends to other settings:  well-performing function field constructions have also been shown to arise from modular curves. Motivated by this, Elkies conjectured that all optimal recursive towers must originate from some type of modular curve—though the exact formalization of this claim remains non-trivial to specify. Independently, Gekeler \cite{MR2037099} further confirmed the relevance of Drinfeld modular curves by showing that certain families of these curves also achieve the aforementioned lower bound (for Ihara's quantity).

When $q $ is not a square, the exact value of $\Ihara(q)$ remains undetermined.
Write $q = p^{2m+1}$ with $m \geqslant 1$. Bassa, Beelen, Garcia, and Stichtenoth \cite{Bassa2015} established a key lower bound: 
\[
\Ihara(q)\geqslant \frac{2(p^{m+1}-1)}{p+1+(p-1)/(p^m-1)} . \]
Notably, this bound is achieved by function field towers derived from Drinfeld modular curves of rank $2m+1$. Subsequently, the work in \cite{Nurdagul2017} and its follow-up \cite{MR4203564} built on this foundation. These contributions extended the analysis and provided a precise modular description for each function field in the towers.

\subsection{Main results}
In \cite{MR3287681} and \cite{MR3433893}, Bassa et al. systematically studied good families of Drinfeld modular curves. Roughly speaking, the points of the curve $X_0 (\n)$ parametrize isomorphism classes of pairs consisting of  rank-two Drinfeld $\A$-modules together with an $\n$-torsion, where the domain $\A$ is derived from a smooth algebraic curve over $\mathbb{F}_{q}$ associated with a marked place at infinity. 
The modular curve $ \x (\n)$ is defined as any geometric component of $X_0 (\n)$. Replacing the full set of Drinfeld modules with normalized modules in the sense of Hayes' notion \cite{MR535766},  
one can construct the normalized Drinfeld modular curves $\dx (\n)$ in a similar manner.
In particular, when $ \A = \mathbb{F}_{q}[t] $, and $\n = (t^n)$, the resulting Drinfeld modular curves $\x (\n)$ and $\dx (\n)$ coincide with the curves considered by Elkies \cite{MR1905359}  after reduction at $(t - 1) $.  

In this paper, we construct the optimal families of Drinfeld modular curves following the framework of Bassa et al.
We focus on the domain $\A$ corresponding to the projective line over the finite field $\mathbb{F}_q$ associated with an infinite place $P_{\rho}$ of degree two. One may show that the domain $ \A $ can be expressed as 
\[
    \A = \mathbb{F}_{q}[x,y ]/  \langle y^2 - ( \zeta+\zeta^q) xy + \zeta^{q+1} x^2 - x \rangle  
\]
where $\zeta$ is some element in $ \mathbb{F}_{q^2} \setminus \mathbb{F}_{q }$.
Recall that \cite{MR4808031} derived that the complete family of rank-two normalized Drinfeld modules is described by $ \phi^{\lambda}$ and its Frobenius twist $\phi^{\sigma;\lambda}$ with independent variable $\lambda  $.  Let $I_\infty = (x,y)$ be an ideal of $ \A$. 
The main result in this paper is devoted to computing the explicit expressions for the function field towers of modular curves with $ \n = I_\infty^n $.

\begin{thmletter}\label{thm:thmA}
    Let $ H^+ = \mathbb{F}_q(t, \zeta,   \nu )$ be the narrow class field of $ \A $ with the relation 
    \[
        \nu^{q+1} = -\frac{1}{(t - \zeta) (t^q - \zeta)}. 
    \]
    The normalized modular curves $ \dx(I_{\infty}^i ) $ are represented by the function field tower 
   \[
   \begin{tikzcd}
       \F_0 \ar[r, "q+1"] &  \F_1\ar[r, "q"] &  \F_2\ar[r, "q"] &  \F_3\ar[r, "q"] &  \F_4\ar[r, "q"] &  \cdots,
   \end{tikzcd}
\]
where the number on the arrow denotes the extension degree. 
The function field $ \F_k $ is generated by the variables $ \lambda_0, \cdots ,\lambda_k $ as follows.
\begin{enumerate}
    \item For $k = 0$, 
       $
        \F_0 = H^+ (\lambda_0 )$. 
    
    \item  For $k = 1$,
    $
        \F_1 = H^+ (\lambda_0 , \lambda_1 )
    $, where the defining equation of $ \lambda_0 $ and $ \lambda_1 $  is given by 
    \[\lambda_1^{q+1}-\lambda_0^{q}\lambda_1^{ q}-\frac{\zeta^{1-q}-1}{\zeta \lambda_0} (t -\zeta^q)\nu\lambda_1+\left( (\zeta^{-q}-\zeta^{-1} ) (t -\zeta^q)+(\zeta^{q-1}-1)^{q+1}\right)\nu\lambda_0^{q-1}=0.
\]
\item   For $k \geqslant 2 $, we obtain
\[
        \F_k =  \F_1(\lambda_2, \cdots, \lambda_k ) = H^+ (\lambda_0 , \lambda_1, \lambda_2 , \ldots, \lambda_k ),
    \]
    where the variables $ \lambda_2, \cdots , \lambda_k $ are subject to
    % \[
    % \sum_{i=0}^{q-1}\left(\frac{\nu (1-\zeta^{1-q})^{q+1} \lambda_0^{q-1}}{\lambda_1-\lambda_0^q}\right)^i(\lambda_2-\lambda_1^{ q})^{q-i} +\lambda_1^{(q-1)}\nu^{\sigma}\frac{\lambda_0^q-\lambda_1}{\nu\lambda_0^{q-1}} = 0 
    % \]
    \[
    \sum_{i=0}^{q-1}\left(\frac{\nu^{\sigma^k} (1-\zeta^{1-q})^{q+1} \lambda_{k-2}^{ q-1}}{\lambda_{k-1}-\lambda_{k-2}^{ q}}\right)^i(\lambda_k-\lambda_{k-1}^{ q})^{q-i} -\nu^{\sigma^{k-1}-\sigma^{k-2}}\frac{\lambda_{k-1}-\lambda_{k-2}^{ q }}{\lambda_{k-2}^{q-1}}\lambda_{k-1}^{q-1} = 0 .
    \]
\end{enumerate}

    % =H^+ (\lambda_0 , u_1),
    
    % \[
    % u_1^{q+1}+\left(\frac{\lambda_0^{q^2}}{(1-\zeta^{1-q})}+\frac{ \nu}{\zeta  T \lambda_0 }\right)u_1+\lambda_0^{q-1}\nu=0
    % \]
    % \[\lambda_1 = \lambda_0^q-(\zeta^{q-1}-1)u_1 
    % \](This shall be replaced)

    % \[
    %     \F_2 = H^+ (\lambda_0 , \lambda_1, \lambda_2 )
    % \]

\end{thmletter}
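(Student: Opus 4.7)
The plan is to exploit the modular interpretation of $\dx(I_\infty^k)$ as the parameter space of chains of $I_\infty$-cyclic isogenies of length $k$ between normalized rank-two Drinfeld $\A$-modules, and to translate each such chain into explicit algebraic relations among the parameters $\lambda_0,\ldots,\lambda_k$. The key input is the classification from \cite{MR4808031}: up to normalized isomorphism, rank-two normalized $\A$-modules belong to the two Frobenius-related families $\phi^\lambda$ and $\phi^{\sigma;\lambda}$, with the normalization datum encoded by the element $\nu$ satisfying $\nu^{q+1} = -1/\bigl((t-\zeta)(t^q-\zeta)\bigr)$. A point of $\dx(I_\infty^k)$ will therefore be recorded as a sequence of $I_\infty$-isogenies
\[
\phi^{\lambda_0} \longrightarrow \phi^{\sigma;\lambda_1} \longrightarrow \phi^{\sigma^2;\lambda_2} \longrightarrow \cdots \longrightarrow \phi^{\sigma^k;\lambda_k},
\]
each with cyclic kernel as an $\A/I_\infty$-module; the Frobenius twist at each step records how the normalization propagates along the chain.

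For the base case $k=0$, the modular curve $\dx(\A)$ is the coarse moduli space of normalized modules, and the parametrization of \cite{MR4808031} identifies it with $\mathbb{P}^1_{H^+}$, yielding $\F_0 = H^+(\lambda_0)$. For $k=1$, I would write down the generic kernel polynomial of a cyclic $I_\infty$-isogeny out of $\phi^{\lambda_0}$, form the quotient module, and identify this quotient with a $\phi^{\sigma;\lambda_1}$. Matching the coefficients of the actions of $x$ and of $y$ on the two sides produces a single algebraic relation between $\lambda_0$ and $\lambda_1$; the expected degree $q+1$ in $\lambda_1$ reflects the $q+1$ Galois orbits of cyclic $I_\infty$-subgroups that survive the sign normalization, cutting down the a priori $q^2+1$ subgroups of $I_\infty$-torsion over the algebraic closure. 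The terms $(\zeta^{1-q}-1)$, $(t-\zeta^q)\nu$, and $(\zeta^{q-1}-1)^{q+1}$ in the stated relation will fall out of expanding the quotient using the defining relation of $\A$.

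For $k\geq 2$, the recursion arises from the rigidity that once the chain is prescribed up to level $k-1$, the next isogeny $\phi^{\sigma^{k-1};\lambda_{k-1}}\to\phi^{\sigma^k;\lambda_k}$ must \emph{avoid} being the dual of the preceding $\phi^{\sigma^{k-2};\lambda_{k-2}}\to\phi^{\sigma^{k-1};\lambda_{k-1}}$, so that only $q$ out of the $q+1$ cyclic $I_\infty$-subgroups of $\phi^{\sigma^{k-1};\lambda_{k-1}}$ remain admissible. Algebraically, this is realized by dividing the level-one-type degree $(q+1)$ relation, now at level $k$, by the known linear factor coming from the inverse of the previous isogeny; the geometric-sum factor $\sum_{i=0}^{q-1}(\cdots)^i(\lambda_k-\lambda_{k-1}^q)^{q-i}$ in the stated recursion is precisely the quotient of this division, while the additive term involving $\nu^{\sigma^{k-1}-\sigma^{k-2}}$ and the ratio $\lambda_{k-1}^{q-1}/\lambda_{k-2}^{q-1}$ records exactly how the normalizing factor $\nu$ transforms under the two successive Frobenius twists.

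The main obstacle will be the careful bookkeeping in this division step: one must verify that the known linear factor coming from the previous isogeny does divide the naive degree $(q+1)$ relation at level $k$, and that the resulting quotient has exactly the shape stated in the theorem. This requires expanding the $I_\infty$-isogeny equations using the explicit presentation $y^2 - (\zeta+\zeta^q)xy+\zeta^{q+1}x^2-x = 0$ of $\A$, and repeatedly reducing Frobenius powers of $\nu$ modulo the relation $\nu^{q+1} = -1/((t-\zeta)(t^q-\zeta))$. After deriving the relation, irreducibility—hence the announced extension degrees $q+1$ and $q$—can be established by a specialization or reduction argument at a well-chosen place of $H^+$ where the configuration of $I_\infty$-torsion is rigid; combined with the moduli-theoretic degree count this completes the identification of $\F_k$ with the claimed function field tower.
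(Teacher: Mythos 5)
Your approach is essentially the same as the paper's: you decompose a primitive $I_\infty^k$-torsion into a chain of cyclic $I_\infty$-isogenies between the normalized models and their Frobenius twists (the paper's $\omega_k = (\tau - u_k)\cdots(\tau - u_1)$), determine the target parameter at each step by matching the $x$- and $y$-actions on both sides of the isogeny (the paper's Lemma \ref{lem:relation-lam}, which yields $\lambda_{i+1} = \lambda_i^q - (\zeta^{q^{i+1}-q^i}-1)u_{i+1}$), and for $k\geqslant 2$ exclude the dual of the preceding isogeny by factoring a linear term out of the degree-$(q+1)$ annihilator polynomial (the paper's passage from $\xi^{\sigma^i;\lambda_i}$ to $\xi_\nabla^{\sigma^i;\lambda_i}$ in Section~\ref{sec:Restriction}). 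One factual slip worth correcting: since $\phi[I_\infty]\cong(\A/I_\infty)^{\oplus 2}\cong\mathbb{F}_q^2$ away from the $I_\infty$-characteristic, there are exactly $q+1$ cyclic $\A/I_\infty$-submodules to begin with, not $q^2+1$ cut down by the sign normalization; the degree $q+1$ of the level-one relation reflects these $q+1$ lines directly, each line $\mathbb{F}_q\mu$ contributing the single value $u_1=\mu^{q-1}$.
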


Similarly, we obtain the modular curves for minimal models.
\begin{thmletter}\label{thm:thmB}
  Let $ H $ be the class field of $ \A $. The minimal modular curves $\x(I_\infty^{k}) $ of Drinfeld $\A$-modules are represented by the function field tower 
   \[
   \begin{tikzcd}
       \G_0 \ar[r, "q+1"] &  \G_1\ar[r, "q"] &  \G_2\ar[r, "q"] &  \G_3\ar[r, "q"] &  \G_4\ar[r, "q"] &  \cdots.
   \end{tikzcd}
\]
The function field $ \G_k $ is generated by $ j_0 $ and $ w_i $ for $ i = 1 , \cdots , k$ in the following forms:
\begin{enumerate}
    \item For $k =0$, $ \G_0 = H(j_0)$.
    \item For $k =1$, $ \G_1 = H(j_0, w_1) = H(w_1)$, and the inclusion $ \G_0 \to \G_1 $ is represented by 
    \[
    j_0 \mapsto -\frac{1+\zeta^{-1} (t - \zeta^q)w_1 }{w_1^{ q+1}+(1-\zeta^{1-q})^{-1}w_1 }.
    \]
    
     \item For $k \geqslant 2$,
      $ \G_k = H(w_1, w_2, \ldots, w_k)$, where $w_i$ satisfy the relations 
    %   \[ 
    %   \sum_{i=0}^{q-1} \left(\frac{1}{(\zeta^{1-q}-1)(1+\zeta^{-1}T^{-1}w_1)}\right)^{i}w_2^{q-i} = \frac{w_1^q}{(1-(\zeta^{q-1}-1)w_1)T^{q-1}(\zeta^{1-q}-1)^{q-1}(1+\zeta^{-1}T^{-1}w_1)^{q-1}}.
    %   \]
        \[  \sum_{i=0}^{q-1}  (w_{k-1}^\nabla)^{i}w_2^{q-i}= \frac{w_{k-1}^q}{1-(\zeta^{q^{k+1}-q^k}-1)w_{k-1}  } \left( w_{k-1}^\nabla (t - \zeta^{q^{k+1}} ) \right)^{q-1} , \]
    and $w_{k-1}^{\nabla}$ is given by 
    \[
        w_{k-1}^{\nabla} = \frac{1}{(\zeta^{q^{k}-q^{k+1}}-1)(1+\zeta^{-q^k} (t - \zeta^{q^{k+1}})w_{k-1})}.
    \]
\end{enumerate}
Now the optimal family of function field towers over the finite field $ \mathbb{F}_{q^4} $ is derived from the reduction of the family in Theorem \ref{thm:thmB}.
\end{thmletter}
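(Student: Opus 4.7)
The plan is to derive Theorem \ref{thm:thmB} from Theorem \ref{thm:thmA} by descending the normalized tower $\F_\ast/H^+$ to a tower of isomorphism classes $\G_\ast/H$, then to establish optimality of the $I_\eta$-reduction through a standard supersingular-mass argument. I would identify $\G_k$ inside $\F_k$ as the fixed field of the residual sign action: in the Hayes framework, normalized rank-two Drinfeld modules are classified up to the $\mathbb{F}_q^*$-action on the leading coefficient, whereas arbitrary rank-two modules are classified up to the full unit group at $P_\rho$, namely $\mathbb{F}_{q^2}^*$. The residual group $\mathbb{F}_{q^2}^*/\mathbb{F}_q^*$ has order $q+1$, and its fixed field in $\F_k$ should be $\G_k$. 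To make $\G_k$ explicit I would introduce the invariants $w_i$ as monomial combinations of $\lambda_{i-1}, \lambda_i, \nu$ whose weights under this residual action vanish; weight bookkeeping forces the shape of $w_i$ essentially uniquely. Since $H^+ = H(\nu)$ via $\nu^{q+1} = -1/((t-\zeta)(t^q-\zeta))$, the passage from $H^+$ to $H$ is compatible with taking invariants.

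I would then verify the $k=1$ rung by substituting the definition of $w_1$ into the Theorem \ref{thm:thmA} relation between $\lambda_0$ and $\lambda_1$, and extracting $j_0$ as a rational function of $w_1$, matching the stated formula; the degree $q+1$ of $\G_1/\G_0$ drops out as the degree of this rational function in $w_1$. For $k \geq 2$ I would substitute the $w_i$-expressions into the recursion of Theorem \ref{thm:thmA}, simplify the Frobenius twists $\nu^{\sigma^j}$ using the relation $\nu^{q+1} = -1/((t-\zeta)(t^q-\zeta))$, and regroup the sum $\sum_{i=0}^{q-1}$ to read off the stated recursion. The auxiliary quantity $w_{k-1}^{\nabla}$ would then be identified as the bookkeeping combination that absorbs the residual Frobenius twist from one level to the next, which explains its compact rational form in $w_{k-1}$ and $t - \zeta^{q^{k+1}}$.

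For the optimality statement, I would reduce the tower at a principal $\A$-ideal $I_\eta$ of degree two; since $\deg I_\eta = 2$, the residue field of $\A/I_\eta$ is $\mathbb{F}_{q^2}$, and combined with the degree-two extension inherent in the class field $H$, the supersingular points of the reduction become rational over $\mathbb{F}_{q^4}$. I would lower-bound the number of rational points on the reduction of $\x(I_\infty^k)$ by the supersingular mass, computed via the Eichler-Deuring formula for $\A$, and upper-bound the genus via the Hurwitz formula with ramification concentrated at the cusps (dictated by the degree pattern $q+1, q, q, \ldots$). Comparing these yields $\lambda(\G_\ast) = q^2-1$, matching the Drinfeld-Vladut bound over $\mathbb{F}_{q^4}$.

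The main obstacle, I expect, is the algebraic descent: pinning down the precise monomial shape of the $w_i$ and unwinding the Theorem \ref{thm:thmA} sum $\sum_{i=0}^{q-1}$ into the cleaner identity featuring $w_{k-1}^\nabla$ and $t - \zeta^{q^{k+1}}$. Tracking the Frobenius powers of $\nu$ through the substitution requires careful bookkeeping, but the optimality count itself follows the standard Elkies-Bassa-Garcia-Stichtenoth template once the explicit recursion for $\G_k$ is in hand.
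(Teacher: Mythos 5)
Your high-level strategy---descend $\F_*$ to $\G_*$ and then count---is the right instinct, but the specific descent mechanism you propose has a genuine gap, and it is not the route the paper takes.

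First, the group you single out is wrong. You claim $[\F_k : \G_k]$ should equal $|\mathbb{F}_{q^2}^*/\mathbb{F}_q^*| = q+1$, with $\G_k$ the fixed field. In fact $j_0 = \lambda_0^{q^2+1}/\nu$ with $\nu^{q+1}\in H$, so $[\F_0 : \G_0] = [H^+(\lambda_0) : H^+(j_0)]\cdot[H^+(j_0) : H(j_0)] = (q^2+1)(q+1)$. Modularly, the normalized $\zeta^q$-type representatives of one isomorphism class form a torsor under $\mathbb{F}_{q^4}^*/\mathbb{F}_q^*$ (automorphisms $c$ of a rank-two module here satisfy $c^{q^4}=c$ since $\deg_\tau\phi_x = 4$), not $\mathbb{F}_{q^2}^*/\mathbb{F}_q^*$. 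Worse, the extension $\F_0/\G_0$ is not even Galois: one has $\lambda_0^{(q^2+1)(q+1)} = -j_0^{q+1}T^{q+\sigma}\in \G_0$, and the constant field $\mathbb{F}_{q^2}$ of $H^+$ does not contain the required $(q^2+1)(q+1)$-th roots of unity in general, so ``fixed field of a group action on $\F_k$'' is not well-posed. Your secondary heuristic that the $w_i$ are ``monomial combinations'' of $\lambda_{i-1},\lambda_i,\nu$ also fails: the correct relation is $w_k = u_k/\lambda_{k-1}^q = (\lambda_{k-1}^q-\lambda_k)\big/\bigl((\zeta^{q^k-q^{k-1}}-1)\lambda_{k-1}^q\bigr)$, which has an additive numerator and cannot be obtained by weight bookkeeping on monomials alone.

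The paper avoids these problems by not descending at all. It reruns the entire construction directly on the minimal model $\Phi^{j_0}$: it picks a primitive $I_\infty^n$-torsion with basis $\beta_1,\dots,\beta_n$, sets $w_k := \Omega_{k-1}(\beta_k)^{q-1}$, and establishes the minimal-model isogeny formula (Lemma~\ref{lem:isogenyPhi}, with the isogeny $\delta_k(\tau-w_k)$ and the recursion for $j_k$) by transporting Lemma~\ref{lem:relation-lam} through the explicit isogeny $\ell_0:\Phi^{j_0}\to\phi^{\lambda_0}$, a $(q-1)$-th root of $\lambda_0^q$, via the commuting square in equation~\eqref{eq:isogeny}. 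The restrictions on a primitive torsion are then expressed as $\Xi^{\sigma^{k-1};j_{k-1}}_\nabla(w_k)=0$, exactly parallel to the $\xi_\nabla$ analysis in the normalized case. This is a direct, non-Galois argument that never needs $\F_k/\G_k$ to be a quotient by a group. Finally, the optimality assertion is deferred to Theorem~\ref{thm:thmC}; there the paper uses the Bassa--Beelen--Nguyen genus formula (Theorem~\ref{thm:genus}) together with an explicit count of supersingular points after $I_\eta$-reduction (Lemma~\ref{lem:cardinality}), rather than a Hurwitz/Eichler--Deuring estimate. Your sketch of that step is morally aligned but would need to be replaced by the actual genus formula to get the exact constant $q^2-1$.
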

\begin{thmletter}\label{thm:thmC}
    Let $ I_{\eta} $ be a principal $\A$-ideal generated by $ z_\eta  $ with $ \deg z_\eta = 2 $. 
    Let $\x ( I_\infty^{k})/I_\eta $ ($k\geqslant 0$) denote the $I_\eta$-reduction of the minimal Drinfeld modular curves $\x ( I_\infty^{k})$. Then the genus of $ \x(I_\infty^{k})/I_\eta $ is given by 
    \begin{equation*} 
     g(\x(I_\infty^{k})/I_\eta) =   -1 + \frac{ q^{k-1} (q+1)  }{  q-1 } - \frac{ 2 }{q-1} \cdot ( q^{\floor{k/2}} + q ^{k - \floor{k/2} -1 }   -1  ) .
    \end{equation*}
    Consider $\x( I_\infty^{k})/I_\eta $ as defined over the constant field $\mathbb{F}_{\mathbf{q}} = \mathbb{F}_{q^4}$. Then all the supersingular points of these curves are $\mathbb{F}_{\mathbf{q}}$-rational. Moreover, the function field tower of $ \x (I_\infty^{k})/I_\eta $ is asymptotically optimal.
\end{thmletter}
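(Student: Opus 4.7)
The plan is to establish the three assertions in order, since the asymptotic optimality follows by combining the first two with a counting argument.

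\emph{Genus.} I would proceed by induction on $k$, applying the Riemann--Hurwitz formula to the successive extensions $\G_{k-1}/I_\eta \hookrightarrow \G_k/I_\eta$ whose defining equations are recorded in Theorem~\ref{thm:thmB}. Inspection of those recursive relations shows that, away from the specialisation of $V(I_\infty)$ and its iterated preimages, each extension is unramified; over those places the ramification is totally wild of degree $q$ (and tame of degree $q+1$ at the bottom step $\G_0 \to \G_1$), with different exponents readable directly from the Artin--Schreier-type shape of the recursion. The splitting behaviour of these finitely many ramified places in successive steps is governed by the action of $\Gal(\G_k/\G_0)$; a dichotomy between even and odd $k$ arises depending on whether the Frobenius at $I_\eta$ has yet returned the relevant places to themselves, and this accounts for the parity-dependent correction term $\;2(q^{\floor{k/2}}+q^{k-\floor{k/2}-1}-1)/(q-1)\;$.

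\emph{Supersingular rationality.} Since $\deg z_\eta = 2$, the residue field $\A/I_\eta$ is isomorphic to $\mathbb{F}_{q^2}$. A rank-two Drinfeld $\A$-module of good reduction at $I_\eta$ is supersingular precisely when its $I_\eta$-torsion vanishes, equivalently when its endomorphism ring is a maximal $\A$-order in the quaternion algebra over $\Nqfrac{\A}$ ramified exactly at $I_\eta$ and $P_\rho$. For such a module, the square of the Frobenius at $I_\eta$ acts as a scalar in $\A/I_\eta$, so the associated $j$-invariant is fixed by $\mathrm{Frob}_{q^4}$ and hence lies in $\mathbb{F}_{q^4}$. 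Because $I_\infty^k$ is coprime to $I_\eta$, the $I_\infty^k$-level structure on a supersingular module is again defined over $\mathbb{F}_{q^4}$, so every supersingular point of $\x(I_\infty^k)/I_\eta$ is $\mathbb{F}_{q^4}$-rational.

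\emph{Asymptotic optimality.} Let $h$ denote the number of supersingular $j$-invariants on $\x(1)/I_\eta$. A Gekeler-type Eichler mass formula, combined with the class-number data of the class field $H$, should yield $h = q+1$. Since the cover $\x(I_\infty^k) \to \x(1)$ is étale away from the cusps and every point in a supersingular fibre is $\mathbb{F}_{q^4}$-rational by Step 2, each supersingular $j$-invariant lifts to $[\G_k:\G_0] = (q+1)q^{k-1}$ distinct $\mathbb{F}_{q^4}$-rational points, giving
\[
  N(\x(I_\infty^k)/I_\eta) \;\geqslant\; h(q+1)q^{k-1} \;=\; (q+1)^2 q^{k-1}.
\]
Pairing this with the leading term $q^{k-1}(q+1)/(q-1)$ of $g$ from Step 1 produces $\lambda \geqslant (q+1)(q-1) = q^2 - 1$, matching the Drinfeld--Vladut upper bound for $\mathbb{F}_{q^4}$.

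The main obstacle will be the ramification bookkeeping in Step 1: pinning down the parity-dependent correction term requires tracking how the decomposition and inertia subgroups of $\Gal(\G_k/\G_0)$ at the wildly ramified places evolve through the tower, which is more subtle than the classical $\A=\mathbb{F}_q[t]$ situation because our $\A$ has an infinite place of degree two. Once that is settled, Steps 2 and 3 reduce to structural facts about supersingular Drinfeld modules and a standard mass-formula computation.
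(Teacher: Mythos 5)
Your Step~3 (optimality from $N \geqslant (q+1)^2 q^{k-1}$ versus $g \sim q^{k-1}(q+1)/(q-1)$) agrees with the paper. However, Steps~1 and~2 each contain a genuine gap.

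\emph{Genus.} The paper does not run a Riemann--Hurwitz induction at all; it simply specialises the Gekeler-type genus formula (Theorem~3.1 of \cite{MR3433893}) to $\n = I_\infty^k$, $\delta = 2$, $P_K = 1$, then uses that genus is invariant under reduction at a place of good reduction and under constant field extension. Your proposed mechanism for the parity-dependent correction is incorrect: the term $q^{\floor{k/2}} + q^{k-\floor{k/2}-1}$ is the quantity $\kappa(I_\infty^k)$ from that formula, a purely combinatorial invariant counting cusps of $X_0(I_\infty^k)$. It is a geometric invariant of $\x(I_\infty^k)$ as a curve over $H$, computed before any reduction, and so cannot depend on ``whether the Frobenius at $I_\eta$ has yet returned the relevant places to themselves.'' A Riemann--Hurwitz induction could in principle be made to work, but the ramification would have to be located at the cusps of the modular tower, not at ``specialisations of $V(I_\infty)$ and its iterated preimages,'' and the even/odd dichotomy would have to be traced to the cusp combinatorics of $\Gamma_0(I_\infty^k)$.

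\emph{Supersingular rationality.} The assertion ``Because $I_\infty^k$ is coprime to $I_\eta$, the $I_\infty^k$-level structure on a supersingular module is again defined over $\mathbb{F}_{q^4}$'' is exactly the nontrivial content, and it does not follow from coprimality alone: torsion points of a supersingular Drinfeld module over $\mathbb{F}_{q^2}$ are not automatically rational over a small extension. What the paper actually does (and you need some version of this): it first shows the supersingular $j$-invariants are roots of the explicit degree-$(q+1)$ equation~\eqref{eq:supersingular} and hence lie in $\mathbb{F}_{q^2}$ (Lemma~\ref{lem:supersingular}); it then runs the isogeny relation~\eqref{eq:j1} upward through the tower, using that the target $j$-invariant is again supersingular and hence in $\mathbb{F}_{q^2}$, to conclude $(1 + (1-\zeta^{q-1})w_1)^{q^4-1} = 1$, whence $w_1 \in \mathbb{F}_{q^4}$, and iterates. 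Your quaternion-order heuristic (Frobenius squared acting as a scalar forcing $j \in \mathbb{F}_{q^4}$) is a reasonable guide to why the $j$-invariants land in a small field, but it says nothing by itself about the $w_i$ coordinates, which are the actual generators of $\bar{\G}_k$. Also, deriving the count $h = q+1$ from an Eichler mass formula ``should yield'' is left unestablished; the paper reads it off directly as the degree of the explicit supersingular polynomial.
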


\subsection{Remarks}
The approaches adopted in the paper are essentially the same as those in Elkies' paper \cite{MR1905359} (see \cites{Nurdagul2017, MR3287681,MR3433893,MR4203564} for further reading). 
We translate the information on primitive $I_\infty^k$-torsions into isogeny relations between the $k$-th Frobenius twists of Drinfeld modules for $ k\geqslant 0 $. 
However, we shall emphasize that the isogeny formula presents the main difficulty in this work.
Our main technique is to describe  explicitly the algebraic structure of $I_\infty$-annihilator $\phi_{I_\infty}^{\lambda}$, which is particularly studied in \cite{MR4808031} with the help of Anderson motives. 

It is remarkable that our Drinfeld modular tower also achieves Ihara's quantity.
From Goppa's construction \cite{MR628795}, good towers yield good linear error-correcting codes. By demonstrating that long linear codes can surpass the Gilbert-Varshamov bound \cite[Proposition 8.4.4]{MR2464941}, the celebrated work of Tsfasman et al. \cite{MR705893} established a crucial connection between coding theory and Ihara's quantity.
Recursive good towers play an important role in the study of Ihara's quantity, coding theory, and cryptography \cites{MR1849075, MR3225937, MR3724425, MR3493874, MR1426240,MR4762483}.

The paper is organized as follows. We introduce some necessary notations and results concerning Drinfeld modules in Section~\ref{sec:notations}. Section~\ref{sec:thmA} is devoted to constructing the tower of normalized Drinfeld module curves. Using this construction, we investigate the tower of minimal Drinfeld modules in Section~\ref{sec:thmB}. In Section~\ref{sec:thmC}, we compute the genus and rational places of the \( I_\eta \)-reduction of the tower to estimate Ihara's quantity.

\section{Preliminaries}\label{sec:notations}

\subsection{Galois groups and Hilbert fields}\label{sec:domain}
We briefly recall the necessary notations that will be used in the remainder of this paper.
Let $  \mathbb{P}^1  $ be the projective line over $ \mathbb{F}_q$. It is clear that the function field of $  \mathbb{P}^1  $ equals $ K = \mathbb{F}_{q}(t) $. 
Let $P_{\rho} $ be a degree two place of $ K $ corresponding to the monic irreducible polynomial $ \rho(t) = (t - \zeta)(t - \zeta^q) \in  \mathbb{F}_{q}[t]$ for some element $ \zeta \in \mathbb{F}_{q^2} \setminus \mathbb{F}_q $.
We view $P_{\rho} $ as the infinity of $ \mathbb{P}^1 $ for the remainder of this paper. Denote by $ \A $ the Dedekind domain arising from $ \mathbb{P}^1  - P_{\rho} $; that is  
\[
    \A = H^0(\mathbb{P}^1_{\mathbb{F}_q} - P_{\rho}, \mathcal{O}_{\mathbb{P}^1}).
\]
Notice that the quotient field of $\A$ recovers the function field $ K $.
Define the coordinates $x, y $ of $ \A $ as 
\[
     x = \frac{1}{\rho(t)} , \quad y = \frac{t}{\rho(t)}. 
\]
Then we know $\A$ is indeed the $\mathbb{F}_q$-algebra generated by $x,y$, that is  
\[
    \A = \mathbb{F}_{q}[x,y ]/  \langle y^2 - ( \zeta+\zeta^q) xy + \zeta^{q+1} x^2 - x \rangle. 
\]
From the class field theory for function fields, the Hilbert field of $ \A $ is given by 
\[ H = \mathbb{F}_q(t, \zeta) = \mathbb{F}_{q^2}(T)  ,\]
where $ T = \frac{1}{t -\zeta^q } = y - \zeta x $.
The Galois group of $ H/ K $ is isomorphic to $ \Cl(\A) = \mathbb{Z} /2  $ generated by 
\[
    \sigma : \zeta \mapsto \zeta^q. 
\]
In particular, $\sigma$ acts on $ T $ by
\[
    T^{\sigma} :=   \frac{1}{t - \sigma(\zeta^q)} = \frac{1}{t - \zeta} = \frac{T}{ 1 + (\zeta^q - \zeta)T}.
\]
Note that we always denote the action of $ \sigma $ by a superscript to simplify the notation.
Let $ H^+ = \mathbb{F}_q (t, \zeta , \nu) = H(\nu) $,
where $\nu$ satisfies 
\[
    \nu^{q+1} = - T^{q + \sigma}=  \frac{-1}{ (t - \zeta)(t^q - \zeta) }. 
\]
As shown in \cite{MR4808031}, $ H^+ $ is the narrow class field of $ K $ with respect to a sign function at $ P_{\rho}$.  
The Galois group $ \Gal(H^+/ K )$ is isomorphic to 
\[ \mathbb{Z}/ 2 \mathbb{Z} \times \mathbb{Z}/ (q+1)\mathbb{Z} \]
with generators $ \sigma $ and $ M_\mu $, where 
\begin{equation}\label{eq:nusigma}
    \sigma : \zeta \mapsto \zeta^q ; \quad \nu \mapsto \nu^{\sigma} := T^{1-q} \nu^q = - \frac{x}{\nu}
\end{equation}
and for $\mu^{q+1} = 1 $, 
\[
    M_{\mu} : \zeta \mapsto \zeta ; \quad  \nu \mapsto \mu \nu .
\]
We know the isomorphism $ \Cl^+(\A) \cong \Gal(H^+/ K ) $ is given by the Artin map of $ H^+/ K $. 
One can show that $ \sigma $ is in fact the image of $ I_\infty $. 
In particular, since $ I_\infty^2 = (x) $, we have $ \sigma^2 = \id $. 
\subsection{Drinfeld $\A$-modules}
For a Dedekind domain $ \A $ over $\mathbb{F}_q$, we define the norm $|a|$ of an element $ a \in \A $ to be 
the cardinality of $ \A /\langle a \rangle $. We define the degree  of $a \in \A$ to be $ \deg(a) := \log_{q}(|a|)$. 
Let $L $ be an $\A$-field, i.e., a field over $\mathbb{F}_q$ equipped with an $\mathbb{F}_q$-homomorphism $\iota: \A \to L$. The kernel of $\iota$ is called the $\A$-characteristic of $L$. Let $L\{ \tau \} $ be the non-commutative polynomial ring generated by $q$-Frobenius endomorphism $\tau $ satisfying 
\[\tau a = a^q \tau ,\]
for all $a  \in L $. 
The non-commutative polynomial ring $L\{ \tau \} $ is usually called the twisted polynomial ring over $L$.
\begin{defn}
We recall that the Drinfeld $ \A $-module of rank $r (\geqslant 1 )$ is defined as the morphism 
\[
    \phi : \A \to L\{ \tau \}
\]
of  $ \mathbb{F}_q$-algebras such that the image $ \phi_a $ of $ a \in \A $ is a twisted polynomial of $\tau$-degree $r \deg(a)$.
\end{defn}
In particular, when $\A$ is the Dedekind domain given in Section \ref{sec:domain}, we have $\deg(x) = \deg(y) = 2$. By definition, a rank-two Drinfeld $ \A$-module requires 
\[\deg_{\tau} \phi_x = \deg_{\tau} \phi_y= 4. 
\] 
Let $ \LT_{\phi}(a) $ be the leading coefficient of $ \phi_a $. We call $ \phi $  a normalized Drinfeld module (or a Hayes-Drinfeld module) if $ \LT_{\phi}(x)=1 $. 
It follows directly that $ \LT_{\phi}(y)/ \LT_{\phi}(x) $ equals either $\zeta $ or $\zeta^q $.
We call $ \phi $ a $\zeta$-type (resp. $\zeta^q$-type) Drinfeld $\A$-module if $ \LT_{\phi}(y)/ \LT_{\phi}(x) $ equals $ \zeta $ (resp. $\zeta^q$).
\begin{defn}
    Let $ \phi $ and $ \psi $ be Drinfeld $\A$-modules. 
    \begin{enumerate}
    \item We say $ \lambda \in L\{ \tau \}$  is an isogeny from $ \phi $ to $ \psi $ (over $ L $) if the equality 
    \[
        \lambda \phi_a = \psi_a \lambda 
    \]
    holds for all $a \in \A$. 
    \item In particular, if $ \lambda \in \bar{L}^* $, we say $ \phi $ is isomorphic to $ \psi $.
    \end{enumerate}
\end{defn}
Obviously, any Drinfeld $\A$-module is isomorphic to some normalized Drinfeld $\A$-module. 

\subsection{Normalized model}
The following theorem is one of the core results in \cite{MR4808031}, where detailed discussions can be found.
\begin{thm} \label{thm:normalized}
Let $\phi^\lambda$ be the $\zeta^q$-type degree two normalized Drinfeld module parameterized by $\lambda$, defined over $H^+(\lambda)$. Its expressions are as follows:
	\[
	\begin{cases}
	\phi_x := (\tau^2 + \tilde{\alpha} \tau + \frac{x}{\nu \lambda^{q-1}} ) (\tau^2+ \alpha \tau+ \nu \lambda^{q-1}),  \\
	\phi_{y} := \zeta^q (\tau^2 + \tilde{\beta} \tau + \frac{y}{\zeta^q \nu \lambda^{q-1}}  ) (\tau^2 + \alpha \tau+ \nu \lambda^{q-1}) ,
	\end{cases}	
	\]
where the coefficients satisfy:
	\begin{align*}
		\tilde{\alpha} & = -\frac{ \nu T^{\sigma q-(q+\sigma)}}{\zeta \lambda^{q^2}}	+ \frac{\zeta^{q}\lambda }{\zeta-\zeta^{q}}, 	\\
		\tilde{\beta}  & =-\frac{ \nu T^{\sigma q-(q+\sigma)}}{\zeta\lambda^{q^2}}+ \frac{\zeta\lambda }{\zeta-\zeta^{q}} , \\
		%\delta &= \frac{y- \zeta^q x }{\theta}  = \frac{1}{(t - \zeta) \theta }\\
\alpha  & =\frac{\lambda^{q^2}}{1-\zeta^{1-q}}+\frac{\nu }{\zeta T\lambda }.
\end{align*}
This module $\phi^\lambda$ is complete, meaning that any $\zeta^q$-type rank-two normalized Drinfeld module over $\bar{L}$ is isomorphic to $ \phi^{\lambda_0} $ for some $\lambda_0 \in \bar{L}$.
\end{thm}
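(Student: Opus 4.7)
The plan is to establish the three claims of Theorem~\ref{thm:thmC} in sequence: the genus formula, the $\mathbb{F}_{q^4}$-rationality of supersingular points, and the asymptotic optimality. All three follow from the explicit description of the minimal tower $\G_k$ in Theorem~\ref{thm:thmB}, combined with standard facts about supersingular rank-two Drinfeld $\A$-modules in characteristic $I_\eta$.

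For the genus, first verify that $\G_0/I_\eta$ is a rational function field over the residue field $\mathbb{F}_{q^2}$ of $I_\eta$, so that $g(\G_0/I_\eta) = 0$. Then proceed by induction via Riemann--Hurwitz applied to the degree-$q$ covers $\G_k/I_\eta \to \G_{k+1}/I_\eta$ (and the initial degree-$q+1$ cover) cut out by the explicit equations in Theorem~\ref{thm:thmB}. These equations have an Artin--Schreier-like shape, so ramification is wild and concentrated at the cusps lying over $I_\infty$; a valuation analysis of the recursion in Theorem~\ref{thm:thmB}(3), together with the auxiliary expression for $w_{k-1}^\nabla$, will pin down the different exponents cusp by cusp. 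The principal cusp contributes the main term $q^{k-1}(q+1)/(q-1)$, while the Galois involution $\sigma:\zeta \mapsto \zeta^q$ interchanges two secondary families of cusps whose ramification behavior alternates with $k \bmod 2$, producing the correction $-\frac{2}{q-1}(q^{\floor{k/2}} + q^{k-\floor{k/2} - 1} - 1)$.

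Next, a rank-two Drinfeld $\A$-module in $I_\eta$-characteristic is supersingular iff its endomorphism algebra is the quaternion $K$-algebra $B$ ramified precisely at $\infty$ and $I_\eta$. Since $\deg I_\eta = 2$, a Deuring-type argument (analogous to Gekeler's treatment in \cite{MR2037099} and the extension in \cite{MR3287681}) shows every supersingular $j$-invariant lies in $\mathbb{F}_{q^4}$, the unique quadratic extension of the residue field. Because $I_\infty$ is coprime to $I_\eta$, the $I_\infty^k$-torsion is étale over the supersingular locus, so every lift to $\G_k/I_\eta$ is likewise $\mathbb{F}_{q^4}$-rational, which gives the second claim. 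To count these points, combine an Eichler-style mass formula for the maximal orders of $B$ (producing roughly $q+1$ supersingular isomorphism classes modulo the $\Cl(\A)$-action) with the fact that the cover $\G_k/I_\eta \to \G_0/I_\eta$ has degree $(q+1)q^{k-1}$ and is étale over the supersingular locus; this yields $N(\G_k/I_\eta) \geq (q+1)^2 q^{k-1} + O(q^{k/2})$.

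Combining this count with the genus main term $q^{k-1}(q+1)/(q-1)$ from the first step gives $\lim_k N(\G_k/I_\eta)/g(\G_k/I_\eta) \geq (q+1)(q-1) = q^2 - 1 = \sqrt{q^4} - 1$, saturating the Drinfeld--Vl\u{a}du\c{t} bound and establishing optimality. The main obstacle I anticipate is the ramification bookkeeping in the genus step: since the tower is wildly ramified of degree $q = p^s$ at every level, the different exponents must be extracted delicately from the recursion in Theorem~\ref{thm:thmB}, and matching the subtle $q^{\floor{k/2}}$ correction term to the exact stated formula requires carefully tracking how $\sigma$ permutes cusp types between even and odd levels.
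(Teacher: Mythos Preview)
Your proposal addresses the wrong theorem. The statement under consideration is Theorem~\ref{thm:normalized}, which asserts the explicit form of the normalized rank-two Drinfeld module $\phi^\lambda$ (the factorizations of $\phi_x$ and $\phi_y$ together with the formulas for $\alpha,\tilde\alpha,\tilde\beta$) and its completeness as a family. Your write-up instead sketches a proof of Theorem~\ref{thm:thmC}: the genus formula for $\x(I_\infty^k)/I_\eta$, the $\mathbb{F}_{q^4}$-rationality of the supersingular locus, and asymptotic optimality. Nothing in your proposal touches the content of Theorem~\ref{thm:normalized}; there is no verification that the displayed twisted polynomials define a Drinfeld $\A$-module, no check of the $\zeta^q$-type normalization, and no argument for completeness.

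For context, the paper does not prove Theorem~\ref{thm:normalized} internally either: it is quoted as one of the core results of \cite{MR4808031}, where the formulas are derived via the annihilators $\phi_{I_\infty}^\lambda$ and $\phi_{I_0}^\lambda$ and Anderson-motive techniques. If you intend to supply an argument, that is the direction to take: start from the annihilator expressions (Equation~\eqref{eq:phiIinf} and the formula for $\phi_{I_0}^\lambda$ just below Lemma~\ref{lem:phi_I_infty_rep}), impose the relation $y^2-(\zeta+\zeta^q)xy+\zeta^{q+1}x^2=x$ on $\phi_x,\phi_y$, and solve for the remaining coefficients. If instead you meant to submit a proof of Theorem~\ref{thm:thmC}, resubmit it under that label; note, however, that the paper's own proof of Theorem~\ref{thm:thmC} obtains the genus by direct substitution into the Bassa--Beelen--Nguyen formula (Theorem~\ref{thm:genus}) rather than by a Riemann--Hurwitz induction, so your proposed approach would in any case differ substantially from the paper's.
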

The isomorphism class of $ \phi^\lambda $ is determined by the  $j$-invariant, namely
\[
    j (\phi^\lambda)= \frac{\lambda^{q^2+1}}{\nu}. 
\]
More precisely, the two Drinfeld modules of $ \zeta^q $-type $ \phi^{\lambda_1} $ and $ \phi^{\lambda_2} $ are isomorphic if and only if
\[
    j(\phi^{\lambda_1}) = j(\phi^{\lambda_2}). 
\]

Applying $ \sigma $ to the coefficients of $ \phi^{\lambda} $ yields the 
$ \zeta $-type Drinfeld module $  \phi ^{\sigma; \lambda' }   $ with the formal variable $ \lambda' := \lambda^\sigma $. 
This is in fact the complete family of $ \zeta $-type normalized modules of rank two. 
 Moreover, applying $ \sigma^k $, we have 
\[
    \phi^{\sigma^k; \lambda'} = \begin{cases}
    \phi^{\sigma; \lambda'}, & \text{ when $k$ is odd; }\\
    \phi^{  \lambda'}, & \text{ when $k$ is even. }\\
    \end{cases}
\]
Accordingly, the $j $-invariant of $\phi ^{\sigma^k; \lambda' } $ is defined as 
\[
    j(\phi ^{\sigma^k; \lambda' }) = \frac{\lambda'^{q^2+1}}{\nu^{\sigma^k}}.
\]
% Precisely, we have 
% \[
% 	\begin{cases}
% 	\phi_x^{\sigma; \lambda'} := (\tau^2 + \tilde{\alpha}^\sigma \tau + \frac{x}{\nu^\sigma \lambda^{\prime q-1}} ) (\tau^2+ \alpha^\sigma \tau+ \nu^\sigma \lambda^{\prime q-1})  \\
% 	\phi_{y}^{\sigma; \lambda'} := \zeta  (\tau^2 + \tilde{\beta}^\sigma \tau + \frac{y}{\zeta \nu^\sigma \lambda^{\prime q-1}}  ) (\tau^2 + \alpha^\sigma \tau+ \nu^\sigma \lambda^{\prime q-1}) ,
% 	\end{cases}	
% 	\]
% 	with
% 	\begin{align*}
% 		\tilde{\alpha}^\sigma & = -\frac{ \nu^\sigma T^{ q-1-\sigma q}}{\zeta^q \lambda^{\prime q^2}}	+ \frac{\zeta\lambda^{\prime} }{\zeta^q-\zeta}, 	\\
% 		\tilde{\beta}^\sigma  & =-\frac{ \nu^\sigma T^{q-1-\sigma q}}{\zeta^q\lambda^{\prime q^2}}+ \frac{\zeta^q\lambda^\prime }{\zeta^q-\zeta} , \\
% 		%\delta &= \frac{y- \zeta^q x }{\theta}  = \frac{1}{(t - \zeta) \theta }\\
% \alpha^\sigma  & =\frac{\lambda^{\prime q^2}}{1-\zeta^{q-1}}+\frac{\nu^\sigma }{\zeta^q T^\sigma\lambda^\prime }.
% \end{align*}

\subsection{Minimal model}\label{sec:minial}
In fact, the isomorphism class of rank-two Drinfeld $\A$-modules can be represented by the minimal model $\Phi^j$, which is parameterized by the $j$-invariant and defined over the Hilbert class field of $\A$.
If we choose $ \ell $ to be a root of $ \ell^{q-1} = \lambda^q $, then $ \ell $ is essentially an isogeny from $ \Phi^j $ to $ \phi^{\lambda} $. 
\begin{thm}\label{thm:Drinfeld}
   With the notations above, the complete family of $\zeta^q$-type Drinfeld $\A$-modules of rank two is represented as the minimal model $ \Phi^j $ (parameterized by $j$) in the following form:
\begin{align*}
\Phi_x^j =& \ell^{-1}\phi^\lambda_x\ell \\
% =\ell^{-1}(\tau^2 + \tilde{\alpha} \tau + \frac{x}{\nu \lambda^{q-1}} )\ell^{q^2}\ell^{-q^2} (\tau^2+ \alpha \tau+ \nu \lambda^{q-1})\ell\\
=&\left(-j^{q(q+1)} T^{(\sigma+q)q}\tau^2+\left(\frac{T^{\sigma q}j^q}{\zeta}+\frac{j^{q+1}T^{\sigma+q}}{1-\zeta^{1-q}}\right)\tau +xj\right)\\
&\cdot\left(\tau^2+\left(\frac{1}{1-\zeta^{1-q}}+\frac{1}{\zeta T}\frac{1}{j}\right)\tau +\frac{1}{j}\right),
\end{align*}
and
\begin{align*}
\Phi_y^j =& \ell^{-1}\phi^\lambda_y\ell\\
% =\ell^{-1}\zeta^q(\tau^2+\tilde{\beta}\tau+\frac{y}{\zeta^q\nu\lambda^{q-1}})\ell^{q^2}\ell^{-q^2}(\tau^2+\alpha\tau+\nu\lambda^{q-1})\ell\\
 =  & \left(- j^{q(q+1)}T^{(\sigma+q)q}\zeta^q\tau^2+\left(\zeta^{q-1}j^q T^{\sigma q}-\frac{\zeta^q T^{\sigma+q}j^{q+1}}{1-\zeta^{q-1}}\right)\tau + yj\right)\\
 &\cdot\left(\tau^2+\left(\frac{1}{1-\zeta^{1-q}}+\frac{1}{\zeta T}\frac{1}{j}\right)\tau +\frac{1}{j}\right).
\end{align*}
So any $\zeta^q$-type rank-two Drinfeld $\A$-module is isomorphic to some $ \Phi^{j_0} $.
\end{thm}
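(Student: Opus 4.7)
The plan is to carry out the conjugation $\Phi_a^j := \ell^{-1}\phi_a^\lambda \ell$ explicitly for $a \in \{x,y\}$, and then rewrite the resulting product into the two-factor form stated in the theorem. The basic tool is the commutation rule $\ell^{-1}\tau^k \ell = \ell^{q^k-1}\tau^k$ for a scalar $\ell$, combined with the identity
\[
\ell^{q^k-1} = (\ell^{q-1})^{1+q+\cdots+q^{k-1}} = \lambda^{q+q^2+\cdots+q^k}
\]
coming from $\ell^{q-1} = \lambda^q$. Since $\phi_x^\lambda$ and $\phi_y^\lambda$ have $\tau$-degree $4$, only the four exponents $q$, $q+q^2$, $q+q^2+q^3$, $q+q^2+q^3+q^4$ will appear.

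First I would conjugate each of the two degree-two factors $F_1 = \tau^2 + \tilde\alpha\tau + x/(\nu\lambda^{q-1})$ and $F_2 = \tau^2 + \alpha\tau + \nu\lambda^{q-1}$ of $\phi_x^\lambda$ separately, then insert the identity $1 = c^{-1}\cdot c$ between the two conjugates with the central scalar $c = \lambda^{-(q^2+q)}$. Multiplying $c$ into the second factor normalizes its leading coefficient to $1$; using the formula $\alpha = \lambda^{q^2}/(1-\zeta^{1-q}) + \nu/(\zeta T\lambda)$ from Theorem \ref{thm:normalized} and the definition $j = \lambda^{q^2+1}/\nu$, one immediately recovers
\[
\tau^2 + \left(\frac{1}{1-\zeta^{1-q}} + \frac{1}{\zeta T}\frac{1}{j}\right)\tau + \frac{1}{j},
\]
matching the stated second factor. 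Absorbing $c^{-1} = \lambda^{q^2+q}$ into the first factor on the right (commuting past each $\tau^k$) then yields
\[
\lambda^{q+q^2+q^3+q^4}\tau^2 + \tilde\alpha\lambda^{q+q^2+q^3}\tau + xj,
\]
which is ready to be rewritten in the target invariants.

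The core algebraic step converts these $\lambda$-expressions into $j, T, \zeta$ form using the two relations $\lambda^{q^2+1} = j\nu$ and $\nu^{q+1} = -T^{q+\sigma}$. The exponent of the leading coefficient factors as $q+q^2+q^3+q^4 = q(q+1)(q^2+1)$, so
\[
\lambda^{q+q^2+q^3+q^4} = (j\nu)^{q(q+1)} = j^{q(q+1)}(\nu^{q+1})^q = -j^{q(q+1)} T^{(\sigma+q)q},
\]
matching the stated leading coefficient. For the $\tau$-coefficient, the decomposition $q+q^2+q^3 = q(q^2+1) + q^2$ gives $\lambda^{q+q^2+q^3} = j^q\nu^q \lambda^{q^2}$; substituting $\tilde\alpha$ and simplifying using $\frac{1}{1-\zeta^{1-q}} = \frac{\zeta^q}{\zeta^q - \zeta}$ recovers $\frac{T^{\sigma q}j^q}{\zeta} + \frac{j^{q+1}T^{\sigma+q}}{1-\zeta^{1-q}}$. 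The computation of $\Phi_y^j$ is formally identical, replacing $F_1$ by $G_1 = \tau^2 + \tilde\beta\tau + y/(\zeta^q\nu\lambda^{q-1})$ and carrying the outer $\zeta^q$ from the factorization of $\phi_y^\lambda$. Finally, completeness is immediate from Theorem \ref{thm:normalized}: any $\zeta^q$-type rank-two Drinfeld module is isomorphic to some $\phi^{\lambda_0}$, and picking any $(q-1)$-th root $\ell_0$ of $\lambda_0^q$ exhibits an isomorphism with $\Phi^{j_0}$ for $j_0 = \lambda_0^{q^2+1}/\nu$; the choice of root is irrelevant since two such roots differ by an element of $\mathbb{F}_q^\times$, which commutes with every $\tau^k$. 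The main obstacle is purely bookkeeping—tracking the four $\lambda$-exponents arising in the conjugation and consistently rewriting them in the invariants $j, T, \zeta$.
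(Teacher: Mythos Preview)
Your proposal is correct and takes essentially the same approach as the paper: the paper simply records the identity $\Phi_a^j = \ell^{-1}\phi_a^\lambda \ell$ and states the resulting expressions without showing intermediate steps, while you carry out that conjugation explicitly via $\ell^{-1}\tau^k\ell = \lambda^{q+\cdots+q^k}\tau^k$ and rewrite everything in terms of $j$, $T$, $\zeta$ using $\lambda^{q^2+1}=j\nu$ and $\nu^{q+1}=-T^{q+\sigma}$. Your bookkeeping of the exponents $q+q^2+q^3+q^4=q(q+1)(q^2+1)$ and $q+q^2+q^3=q(q^2+1)+q^2$ is exactly what is needed, and the completeness argument from Theorem~\ref{thm:normalized} is the intended one.
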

Obviously, the smallest field of definition of $ \Phi^j $ is $ H(j) = \mathbb{F}_{q}(T, \zeta, j )$.
One may also obtain the minimal model of $\zeta$-type rank-two Drinfeld module $   \Phi ^{\sigma; j'}$ parameterized by the formal variable $ j' $ by applying the $\sigma $-action on the coefficients of $\Phi^j$. In the same manner, we adopt the symbol $  \Phi ^{\sigma^k; j'}$ for the $k$-th $\sigma$-action of $ \Phi ^{j}$. 
\subsection{Torsions}
Let $\phi$ be a rank $r$ Drinfeld $\A$-module over the $\A$-field $L$.
Let $I$ be an ideal of $ \A $. Denote by $\phi[I]$
the $I$-torsion of $ \phi $, i.e., the intersection of all kernels $ \ker \phi_{f} $ for all $f \in I $. Through the Drinfeld module $\phi$, the $I$-torsion admits a natural $\A$-module structure. If the $\A$-characteristic of $L$ is disjoint with $I$, then the $\A$-module structure on $\phi[I]$ is isomorphic to 
$
    (\A /I)^{\oplus r}
$.

\begin{defn}
   A primitive $ \n $-torsion of $ \phi $ is an $\A$-submodule of $ \phi[\n] $ which is isomorphic to $ \A / \n $. We denote by $   \Prim_{\n}(\phi) $ the set  collecting all the primitive $ \n $-torsion of $ \phi $.
\end{defn}

\begin{defn}
The maximal common monic right-divisor $\phi_I \in \bar{L}\{\tau\}$ of each $\phi_{f} $ for $f \in I $ is called the annihilator of the ideal $I$. 
\end{defn} 
It is trivial to check that the kernel of $\phi_I$ coincides with $\phi[I]$. 
Let $ I_\infty = (x, y ) $ and $ I_0 = ( x - \zeta^{-q-1}, y )$. 
Note that the twisted polynomial $ \tau^2+ \alpha \tau+ \nu \lambda^{q-1} $ is the common right-divisor of both $\phi_x^\lambda$ and $ \phi_y^\lambda $, we conclude that 
\begin{equation}\label{eq:phiIinf}
    \phi_{I_\infty}^{\lambda} = \tau^2+ \alpha \tau+ \nu \lambda^{q-1} .
\end{equation}
\begin{lem}[Equation (48) of \cite{MR4808031}]\label{lem:phi_I_infty_rep}
Let $\phi^{\lambda}$ be the $\zeta^q$-type normalized Drinfeld module introduced above. Then the annihilator $ \phi_{I_\infty}^{\lambda}$ verifies the equality:
\[\phi_{I_\infty}^{\lambda}=\frac{\tau+A(\lambda)}{C(\lambda)}\phi_y^{\lambda}-\frac{\zeta\tau+B(\lambda)}{C(\lambda)}\phi_x^{\lambda}\]
where
\[A(\lambda)=\frac{\zeta}{\zeta-\zeta^q}\lambda^q,\quad B(\lambda)=\frac{\zeta^2}{\zeta-\zeta^q}\lambda^q,\quad C(\lambda)=\frac{T}{1-\zeta^{q-1}}\frac{\lambda^q}{\delta}.\]
\end{lem}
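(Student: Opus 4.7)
The plan is to leverage the factorizations furnished by Theorem~\ref{thm:normalized}, which display $\phi_{I_\infty}^\lambda = \tau^2 + \alpha\tau + \nu\lambda^{q-1}$ as a common right-divisor of both $\phi_x^\lambda$ and $\phi_y^\lambda$. Writing
\[
\phi_x^\lambda = U_x\cdot\phi_{I_\infty}^\lambda,\qquad \phi_y^\lambda = U_y\cdot\phi_{I_\infty}^\lambda,
\]
with $U_x = \tau^2 + \tilde\alpha\tau + \tfrac{x}{\nu\lambda^{q-1}}$ and $U_y = \zeta^q\tau^2 + \zeta^q\tilde\beta\tau + \tfrac{y}{\nu\lambda^{q-1}}$, and setting $\delta = \nu\lambda^{q-1}$ for the constant term of $\phi_{I_\infty}^\lambda$, the asserted identity becomes equivalent to the polynomial identity
\[
(\tau+A)\,U_y \;-\; (\zeta\tau+B)\,U_x \;=\; C
\]
in the twisted polynomial ring $\bar L\{\tau\}$.

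I would then expand the left-hand side using $\tau a = a^q\tau$ and match coefficients of $\tau^j$ for $j=3,2,1,0$. The $\tau^3$ coefficient is $\zeta^{q^2}-\zeta$, which vanishes because $\zeta\in\mathbb F_{q^2}$. With $B = \zeta A$ in the given formulas, the vanishing of the $\tau^2$ coefficient reduces to $\tilde\beta - \tilde\alpha = \lambda$, which is an immediate subtraction from the closed-form expressions in Theorem~\ref{thm:normalized}. Matching the constant term yields
\[
C \;=\; \frac{Ay - Bx}{\nu\lambda^{q-1}} \;=\; \frac{\zeta\lambda^q\,(y-\zeta x)}{(\zeta-\zeta^q)\,\nu\lambda^{q-1}} \;=\; \frac{T}{1-\zeta^{q-1}}\cdot\frac{\lambda^q}{\delta},
\]
using $T = y-\zeta x$ and $\tfrac{\zeta}{\zeta-\zeta^q} = \tfrac{1}{1-\zeta^{q-1}}$, which is precisely the asserted $C(\lambda)$.

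The main obstacle is the vanishing of the $\tau^1$ coefficient, which unpacks to
\[
\frac{y^q-\zeta x^q}{\nu^q\lambda^{(q-1)q}} \;+\; A\zeta^q\tilde\beta \;-\; B\tilde\alpha \;=\; 0.
\]
Here one must carry through the $\sigma$-action bookkeeping: use the norm relation $\nu^{q+1} = -T^{q+\sigma}$ and the identity $T^\sigma = T/(1+(\zeta^q-\zeta)T)$ from Section~\ref{sec:domain} to rewrite the Frobenius of $\tilde\alpha,\tilde\beta$, and then invoke the defining relation $y^2 - (\zeta+\zeta^q)xy + \zeta^{q+1}x^2 = x$ of $\A$ (applied to its $q$-th power) to express $y^q-\zeta x^q$ in the form needed. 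Once these substitutions are made, the Frobenius-twisted and untwisted contributions of $\nu/(\zeta\lambda^{q^2})$ cancel, mirroring the cancellation already observed in the $\tau^2$ coefficient, and the identity closes up.

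All three remaining coefficient comparisons are essentially direct substitutions from Theorem~\ref{thm:normalized}; only the $\tau^1$ step genuinely uses the arithmetic of $\A$ and the narrow-class field relation for $\nu$. With these verifications in hand, the identity $(\tau+A)U_y - (\zeta\tau+B)U_x = C$ holds in $\bar L\{\tau\}$, and dividing through on the right by $\phi_{I_\infty}^\lambda$ and by $C$ yields the stated formula.
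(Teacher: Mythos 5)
The paper itself does not prove this lemma; it is imported wholesale as ``Equation (48) of \cite{MR4808031},'' so there is no in-paper proof to compare against. Your reduction is the natural one, and it is correct: with $\phi_x^\lambda = U_x\,\phi_{I_\infty}^\lambda$, $\phi_y^\lambda = U_y\,\phi_{I_\infty}^\lambda$ taken from Theorem~\ref{thm:normalized}, and noting that the paper's fraction notation means \emph{left} division (i.e.\ $\tfrac{\tau+A}{C} = C^{-1}(\tau+A)$, as the coefficient comparisons in the proof of Lemma~\ref{lem:relation-lam} confirm), the claim is equivalent to the scalar identity $(\tau+A)U_y - (\zeta\tau+B)U_x = C$ in $\bar{L}\{\tau\}$, and $\delta = \nu\lambda^{q-1}$ as you inferred. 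Your $\tau^3$, $\tau^2$, and $\tau^0$ checks are exactly right.

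Your account of the $\tau^1$ coefficient, however, names the wrong ingredients, even though the conclusion holds. That coefficient is
\[
\frac{y^q-\zeta x^q}{\nu^q\lambda^{q(q-1)}} \;+\; A\zeta^q\tilde\beta \;-\; B\tilde\alpha,
\]
and here neither $\tilde\alpha$ nor $\tilde\beta$ is Frobenius-twisted, the defining relation of $\A$ is never used, and the identity $T^\sigma = T/(1+(\zeta^q-\zeta)T)$ plays no role. What actually closes the computation is: (i) $y^q - \zeta x^q = (y-\zeta^q x)^q = T^{\sigma q}$, which is just the $q$-th power of $T^\sigma = y-\zeta^q x$; and (ii) with $B=\zeta A$, the second part is $A(\zeta^q\tilde\beta - \zeta\tilde\alpha)$, in which the closed-form $\lambda$-parts of $\tilde\alpha,\tilde\beta$ cancel while the $\nu$-parts combine to give
\[
A(\zeta^q\tilde\beta - \zeta\tilde\alpha) = \frac{\zeta(1-\zeta^{q-1})}{\zeta-\zeta^q}\cdot\frac{\nu\,T^{\sigma q - (q+\sigma)}}{\lambda^{q(q-1)}} = \frac{\nu\,T^{\sigma q - (q+\sigma)}}{\lambda^{q(q-1)}}.
\]
Adding the two contributions gives $\frac{T^{\sigma q}}{\lambda^{q(q-1)}}\bigl(\nu^{-q} + \nu\,T^{-(q+\sigma)}\bigr)$, which vanishes precisely by the narrow-class-field relation $\nu^{q+1} = -T^{q+\sigma}$. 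So the proof goes through, but you should replace the appeal to the $\A$-relation and the $T^\sigma$-rewriting by these two facts.
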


Moreover, the annihilator  $ \phi_{I_0}^{\lambda}$ for the ideal $I_0$ is also determined in  \cite{MR4808031}: 
\[
\phi_{I_0}^{\lambda} = \phi^{\lambda}_{I_\infty} + \frac{(1-\zeta^{q-1}) \nu  }{\zeta^q T \lambda } \tau +  \frac{\nu \lambda^{q-1}}{\zeta^q T}.
\]
We remark that the explicit formula of $\phi^{\lambda}$ is indeed deduced from the expressions of $ \phi_{I_\infty}^{\lambda} $ and $ \phi_{I_0}^{\lambda} $ through technical computations.
For the minimal model $\Phi^j$, one may show that 
\[
\Phi_{I_\infty}^j =  \tau^2+\left(\frac{1}{1-\zeta^{1-q}}+\frac{1}{\zeta T}\frac{1}{j}\right)\tau +\frac{1}{j} 
\]
and 
\[
 \Phi_{I_0}^j   =   \tau^2+\left(\frac{1}{1-\zeta^{1-q}} + \frac{1   }{\zeta^q T j  } \right)\tau  +  \left(\frac{1 }{\zeta^q T   }+1 \right)\frac{1}{j }.  
\] 
\subsection{Modular curves}
Let $\A$ temporarily be a Dedekind domain with quotient field $ K $.
For  a congruence subgroup $ \Gamma  $ of $ \GL_2 (\A)$, the quotient of the Drinfeld upper plane by $ \Gamma $  classifies isomorphism classes of Drinfeld modules equipped with some "level structure". These quotients are the analogs of various modular curves classifying elliptic curves. 

For a nonzero ideal $ \n \in \A$, 
  Gekeler investigates in \cite{MR594434} (among other things) the Drinfeld modular curve $Y_0(\n)$, defined as the Drinfeld upper space modulo the arithmetic subgroup 
  \[
   \Gamma_0(\n) := \left\{  \begin{pmatrix}
    a & b \\
    c & d 
  \end{pmatrix} \in \GL_2(\A) \Big| c \equiv 0 \mod \n \right\}.
  \]
The points on the curve $Y_0(\n)$ parametrize isomorphism classes of
pairs of Drinfeld $\A$-modules of rank two together with a primitive $ \n $-torsion.

 Adding so-called cusps to $ Y_0 (\n)$ gives a projective algebraic curve $X_0(\n) $ defined over $ K $ that in general however will
not be absolutely irreducible. Indeed, the irreducible components are in one-to-one correspondence with the class group $\Cl(\A)$ of $ \A $. 
It is straightforward to see that the absolutely irreducible components of $X_0(\n)$ are mutually isomorphic by considering the action of $\Cl(\A)$.
We will denote by $\x(\n)$ an absolutely irreducible component of $X_0(\n)$. The cusps are distributed equally among the absolutely irreducible components of $X_0(\n)$. In particular, there exist exactly $  |\Cl(\A)| $ cusps on the modular curve $\x(1)$.

To simplify notation, we use the informal definition of  the absolutely irreducible component of $ X_0(\n )$ as follows. 
\begin{defn} \label{defn:equivalence}
      Let $G$ and $H$ be corresponding $\n$-torsion of Drinfeld modules $\phi $ and $\psi$ respectively. 
     We say that the pair $(\phi, G ) $ is equivalent to $ (\psi, H )$, if there exists an isogeny $\lambda \in \bar{L}\{ \tau \}$ from $ \phi $ to $ \psi $ such that  
    $\lambda G = H$. 
\end{defn}
\begin{defn}[Minimal Modular Curve]
 We formally define the ($ \zeta^q$-type) modular curve $ \x (\n)$ as the algebraic curve that parametrizes the pairs $ (\phi, G ) $ modulo the equivalence in Definition \ref{defn:equivalence}, where $ \phi $ denotes the $ \zeta^q$-type Drinfeld $ \A $-modules and $ G $ denotes a primitive $ \n $-torsion of $ \phi $. 
\end{defn}
Analogously, we restrict the Drinfeld modules to be normalized, then we obtain the similar concept of modular curve:
\begin{defn}[Normalized modular curve]
The ($ \zeta^q$-type) normalized modular curve $ \dx (\n)$ is an algebraic curve parametrizing the pairs $ (\phi, G ) $, where $ \phi $ denotes a $ \zeta^q$-type normalized Drinfeld $ \A $-modules and $ G $ denotes a primitive $ \n $-torsion of $ \phi $. 
\end{defn}

In particular, we choose $ \n $ to be of the form $ I_\infty^k $ with $k \geqslant 1 $. Then there are exactly 
\begin{equation}\label{eq:Prim}
  |\Prim_{\n}(\phi)|
  = (q+1) q^ {  k-1 } 
\end{equation}
distinct choices of primitive $ I_\infty^k $-torsion of $ \phi $. For a primitive $I_\infty^{k+1} $-torsion $G$ of $ \phi $, we know 
\[
  I_\infty \cdot G := \{ \phi_z (\mu ) | \mu \in G , z \in I_\infty \}
\]
forms a primitive $I_\infty^{k}$-torsion. Since a geometric point of $ \x (I_\infty^{k+1}) $ is represented by the pair $ (\phi , G)$, the map $(\phi, G ) \mapsto  (\phi , I_\infty \cdot G) $ induces a covering morphism from $ \x(I_\infty^{k+1})  $ to $\x(I_\infty^{k})$. 
So the covering degree of $\x(I_\infty^{k+1}) $ over $ \x(I_\infty^{k}) $ is given by 
\[
   [ \x(I_\infty^{k+1}) : \x(I_\infty^{k})] =  \frac{ | \Prim_{I_\infty^{k+1}}(\phi) | }{ | \Prim_{I_\infty^k}(\phi) |} = \begin{cases}
        q+1, & \text{when $k =0$; } \\
        q,& \text{when $k \geqslant 1$. } 
    \end{cases}
\] 
This formula coincides with degrees displayed in Theorem \ref{thm:thmB}.

\section{Normalized Drinfeld modular tower}\label{sec:thmA}
We wish to construct the normalized Drinfeld modular tower in this section. 
We begin with the construction of isogeny 
\[
    \tau - u_1 : \phi^{\lambda_0} \to \phi^{\sigma; \lambda_1 } 
\]
 for the normalized Drinfeld $\A$-modules. 
\subsection{Isogeny formula}\label{sec:IsogenyFormula}
Suppose that $ \mu_1\in \phi^{\lambda_0} [I_\infty]$, namely
\[
 \phi^{\lambda_0}_x(\mu_1) = 0 \quad \text{and}\quad \phi^{\lambda_0}_y(\mu_1) = 0 .
\]
 
Since the twisted polynomial $\phi_{I_\infty}^{\lambda_0}  $ (see Equation \eqref{eq:phiIinf})  is  the annihilator of $I_\infty$, we have  
\[
\phi^{\lambda_0}_{I_\infty}(\mu_1) = (\tau^2+ \alpha \tau+ \nu \lambda_0^{q-1})(\mu_1) = 0 .
\]
Let $ u_1 =\mu_1^{q-1} $. Then $ u_1 $ satisfies the equality 
\begin{equation}\label{eq:u1lambda0}
 \xi^{\lambda_0} (u_1) := u_1^{q+1}+ \left(\frac{\lambda_0^{q^2}}{1-\zeta^{1-q}}+\frac{\nu }{\zeta T\lambda_0 }\right) u_1 + \nu \lambda_0^{q-1} = 0 . 
\end{equation}

Since $ \tau -u_1 $ is a right-divisor of $ \phi_{x}^{\lambda_0}$ and $ \phi_y^{\lambda_0}$, we know that $\tau - u_1$ is an isogeny from $ \phi^{\lambda_0 } $ to some Drinfeld module of $\zeta$-type, say $ \phi^{\sigma; \lambda_1}$. That is 
\[
    (\tau - u_1)  \phi_a^{\lambda_0 } = \phi_a^{\sigma ; \lambda_1}     (\tau - u_1) 
\]
for all $ a \in \A $. 
The following lemma determines the value of $ \lambda_1 $. 
 
\begin{lem}\label{lem:relation-lam}
    Let $ \phi^{\lambda_0} $ and $\phi^{\sigma; \lambda_1 }: \A \to L\{\tau\} $ denote the normalized Drinfeld $\A$-modules of $\zeta^q$-type and $\zeta$-type respectively. Suppose that $\xi^{\lambda_0} (u_1) =0$.
    Then $ \tau - u_1 $ is an isogeny from $\phi^{\lambda_0} $ to $\phi^{\sigma; \lambda_1}  $, where 
    \begin{equation}\label{eq:lambda1}
    \lambda_1 = \lambda_0^{q}-(\zeta^{q-1}-1)u_1. 
    \end{equation}
\end{lem}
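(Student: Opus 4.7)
The plan is to verify the intertwining identity
\[
(\tau - u_1)\phi^{\lambda_0}_a = \phi^{\sigma;\lambda_1}_a(\tau - u_1)
\]
for all $a \in \A$, with $\lambda_1 = \lambda_0^q - (\zeta^{q-1} - 1)u_1$. Since $\A$ is generated as an $\mathbb{F}_q$-algebra by $x$ and $y$, it suffices to verify this for $a = x$ and $a = y$.

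First, I would establish that $\tau - u_1$ is an isogeny to some codomain $\tilde\phi$. Evaluating the annihilator $\phi^{\lambda_0}_{I_\infty} = \tau^2 + \alpha\tau + \nu\lambda_0^{q-1}$ at any $\mu_1 \in \bar L$ with $\mu_1^{q-1} = u_1$ yields $\mu_1(u_1^{q+1} + \alpha u_1 + \nu\lambda_0^{q-1}) = \mu_1 \cdot \xi^{\lambda_0}(u_1) = 0$ by hypothesis. Hence $\tau - u_1$ is a right-factor of $\phi^{\lambda_0}_{I_\infty}$, and writing $\phi^{\lambda_0}_{I_\infty} = (\tau + c)(\tau - u_1)$ with $c = \alpha + u_1^q = -\nu\lambda_0^{q-1}/u_1$, the two expressions for $c$ are consistent precisely by the relation $\xi^{\lambda_0}(u_1) = 0$. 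Because Lemma~\ref{lem:phi_I_infty_rep} exhibits $\phi^{\lambda_0}_{I_\infty}$ as a common right-divisor of $\phi^{\lambda_0}_x$ and $\phi^{\lambda_0}_y$, so is $\tau - u_1$. The standard quotient construction for Drinfeld modules then produces a unique $\tilde\phi$ with $(\tau - u_1)\phi^{\lambda_0}_a = \tilde\phi_a(\tau - u_1)$ for every $a \in \A$. Comparing leading $\tau$-coefficients on both sides for $a = x$ and $a = y$, and using $\zeta^{q^2} = \zeta$ (since $\zeta \in \mathbb{F}_{q^2}$), we see that $\tilde\phi_x$ and $\tilde\phi_y$ are monic of leading coefficients $1$ and $\zeta$ respectively. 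Thus $\tilde\phi$ is a normalized rank-two Drinfeld $\A$-module of $\zeta$-type, and the $\sigma$-twisted form of Theorem~\ref{thm:normalized} gives $\tilde\phi = \phi^{\sigma;\lambda_1}$ for a unique $\lambda_1 \in \bar L$.

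To pin down $\lambda_1$, I would match a specific coefficient of the intertwining identity for $a = x$. Expanding both sides as twisted polynomials of $\tau$-degree $5$ via Theorem~\ref{thm:normalized} and its $\sigma$-twist, the coefficient at $\tau^3$ (or equivalently, the subleading coefficient of $\tilde\phi_x$ obtained from $(\tau - u_1)\phi^{\lambda_0}_x$) produces an algebraic equation relating $\lambda_1$ to $u_1$, $\lambda_0$, $\alpha$, $\tilde\alpha$, $\nu^\sigma$, and $T^\sigma$. Substituting the proposed $\lambda_1 = \lambda_0^q - (\zeta^{q-1} - 1)u_1$ and simplifying via Equation~\eqref{eq:nusigma} (giving $\nu^\sigma = T^{1-q}\nu^q = -x/\nu$) and the relation $T \cdot T^\sigma = x$, one reduces the matching identity to the defining relation $\xi^{\lambda_0}(u_1) = 0$, confirming the claim. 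Matching of the $a = y$ equation follows from the $a=x$ case plus the common right-factor structure of $\phi^{\lambda_0}_x, \phi^{\lambda_0}_y$ through $\phi^{\lambda_0}_{I_\infty}$.

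The main obstacle is the algebraic simplification in the last step. The intertwining at high-degree coefficients introduces monomials in $u_1$ of degree up to $q^2$, and the relation $u_1^{q+1} = -\alpha u_1 - \nu\lambda_0^{q-1}$ must be applied repeatedly (together with its $q$-th iterate) to reduce these to linear expressions in $u_1$. Simultaneously, the Galois relations among $\nu$, $T$, $\zeta$ and their $\sigma$-conjugates must be tracked carefully. The elegant linear form $\lambda_1 = \lambda_0^q - (\zeta^{q-1} - 1)u_1$ emerges because, after these reductions, only the Frobenius-lift term $\lambda_0^q$ and the first-order $\sigma$-correction $-(\zeta^{q-1} - 1)u_1$ survive, encoding the precise deformation induced by the $\sigma$-twist from $\zeta^q$-type to $\zeta$-type.
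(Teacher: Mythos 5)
Your approach is genuinely different from the paper's, and it has a real gap. The paper proves the lemma by working with the low-degree expression $\phi^{\sigma;\lambda_1}_{I_\infty}(\tau - u_1)$, using Lemma~\ref{lem:phi_I_infty_rep} to express it as $v_1 \phi_y^{\lambda_0} + v_2 \phi_x^{\lambda_0}$, and expanding $(v_1, v_2)$ in an explicit basis $\e_1, \e_2, \e_3$ of a three-dimensional $L$-vector space. The coefficient comparison then takes place in $\tau$-degrees $0, 1, 2$ only, and the relation $\lambda_1^q = \lambda_0^{q^2} - (\zeta^{1-q}-1)u_1^q$ falls out cleanly from equating two formulas for $t_2$. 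Your route instead expands the full degree-$5$ intertwining identity $(\tau - u_1)\phi_x^{\lambda_0} = \phi_x^{\sigma;\lambda_1}(\tau - u_1)$ and tries to match the $\tau^3$-coefficient directly.

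The gap is that you never perform the decisive computation. You state that matching the $\tau^3$-coefficient, substituting the proposed $\lambda_1$, and ``simplifying via Equation~\eqref{eq:nusigma}'' reduces the identity to $\xi^{\lambda_0}(u_1) = 0$, and you explicitly flag this simplification as ``the main obstacle'' --- but you do not carry it out. Since $\phi_x^{\lambda_0}$ is a product of two quadratic twisted polynomials with coefficients involving $\alpha, \tilde\alpha, \nu\lambda_0^{q-1}, x/(\nu\lambda_0^{q-1})$, the $\tau^3$-coefficient of the left-hand side, after the $q$-th power reduction $u_1^{q+1} = -\alpha u_1 - \nu\lambda_0^{q-1}$ and its Frobenius iterate, is a substantial expression, and whether it actually collapses to a multiple of $\xi^{\lambda_0}(u_1)$ under the proposed $\lambda_1$ is precisely what must be verified. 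Until that verification is done, the proof is incomplete: the entire content of the lemma is that this particular linear relation between $\lambda_1$ and $u_1$ is the correct one, and asserting that the algebra will work out is not a proof. As a secondary matter, even if the $\tau^3$-comparison were carried out, you would also need to argue that it suffices --- e.g.\ that the $\tau^3$-equation uniquely determines $\lambda_1$ given $u_1$ and $\lambda_0$ modulo $\xi^{\lambda_0}(u_1)=0$, or else that the remaining coefficient matchings at $\tau^0, \dots, \tau^5$ are automatically satisfied. The paper's vector-space argument sidesteps both of these difficulties by construction.
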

\begin{proof} 
Without loss of generality, we may assume that $ L $ is a perfect field extension over $ \mathbb{F}_{q}(t, \zeta) $ containing both $ \lambda_0 $ and $ \lambda_1 $. 
We define 
\[
V = \{ (a,b) \in L\{ \tau \}^2| \deg_{\tau}(a \phi_y^{\lambda_0} + b \phi_x^{\lambda_0} ) \leqslant 3 , \deg_\tau (a) \leqslant 2, \deg_\tau(b) \leqslant 2 \}.
\]
It is clear that $ V $ is a three-dimensional $L$-vector space and contains the following three vectors by Lemma \ref{lem:phi_I_infty_rep}:
\begin{align*}
    \e_1 &=(1,-\zeta^q),
    \\
    \e_2 &=\left(\frac{\tau+A(\lambda_0)}{C(\lambda_0)}, -\frac{\zeta\tau+B(\lambda_0)}{C(\lambda_0)}\right), \\
    \e_3 &=\tau\left(\frac{\tau+A(\lambda_0)}{C(\lambda_0)}, -\frac{\zeta\tau+B(\lambda_0)}{C(\lambda_0)}\right).
\end{align*} 
Since $\e_1,\e_2,\e_3$ are linearly independent, they form an $L$-basis of $V$. 

By the $\zeta$-type version of Lemma \ref{lem:phi_I_infty_rep}, we have
 \[ \phi^{\sigma; \lambda_1}_{I_\infty} = \left(\frac{\tau+A(\lambda_1 )}{C(\lambda_1 )}\right)^\sigma\phi_y^{\sigma;\lambda_1 } -\left(\frac{\zeta\tau+B(\lambda_1 )}{C(\lambda_1 )}\right)^\sigma\phi_x^{\sigma; \lambda_1 } .
 \]
By assumption, $ \tau - u_1$ is an isogeny, we have
\begin{align*}
  \phi_{I_\infty}^{\sigma;\lambda_1 }(\tau-u_1)= & \left(\frac{\tau+A(\lambda_1 )}{C(\lambda_1 )}\right)^\sigma\phi_y^{\sigma;\lambda_1 }(\tau-u_1)-\left(\frac{\zeta\tau+B(\lambda_1 )}{C(\lambda_1 )}\right)^\sigma\phi_x^{\sigma;\lambda_1 }(\tau-u_1)\\
  = & \left(\frac{\tau+A(\lambda_1 )}{C(\lambda_1 )}\right)^\sigma(\tau-u_1)\phi_y^{\lambda_0}-\left(\frac{\zeta\tau+B(\lambda_1 )}{C(\lambda_1 )}\right)^\sigma(\tau-u_1)\phi_x^{\lambda_0}\\
  = : &  v_1  \phi_y ^{\lambda_0} + v_2 \phi_x ^{\lambda_0}.
\end{align*}
It yields that the pair $\mathbf{v}= (v_1, v_2) \in L\{ \tau \}^2 $ satisfies the required condition of $V$.
Then there exist some coefficients $t_1,t_2,t_3\in L$, such that
\[ \mathbf{v} =t_1 \e_1+t_2 \e_2+t_3 \e_3 ,\]
namely,
\begin{align}
 & \left(  \left(\frac{\tau+A(\lambda_1 )}{C(\lambda_1 )}\right)^\sigma(\tau-u_1),-\left(\frac{\zeta\tau+B(\lambda_1 )}{C(\lambda_1 )}\right)^\sigma(\tau-u_1)\right) \nonumber \\
 &=\left(t_1+(t_2+t_3\tau)\frac{\tau+A(\lambda_0)}{C(\lambda_0)},-\zeta^q t_1-(t_2+t_3\tau)\frac{\zeta\tau+B(\lambda_0)}{C(\lambda_0)}\right) . \label{eq:compare}
\end{align}

We now compare the coefficients of $\tau^i$-terms for $ i=0,1,2 $ in the above equation, proceeding as follows. 
\begin{enumerate}
\item[(1)]   
By comparing the $\tau^2$-terms of \eqref{eq:compare}, we have
\[\left(\frac{1}{C(\lambda_1 )^\sigma},\frac{\zeta^\sigma}{C(\lambda_1 )^\sigma}\right)=\left(t_3C(\lambda_0)^{-q},t_3\zeta^q C(\lambda_0)^{-q}\right).\]
This is equivalent to
\[t_3=C(\lambda_0)^qC(\lambda_1 )^{-\sigma}.\]

\item[(2)]  Coefficients of the $\tau$-terms in \eqref{eq:compare} yield that 
\begin{align*}
   & \left(-\frac{ u_1^q}{C(\lambda_1 )^{\sigma}}+\frac{A(\lambda_1 )^\sigma}{C(\lambda_1 )^{\sigma}},\zeta^\sigma \frac{ u_1^q}{C(\lambda_1 )^{\sigma}}-\frac{B(\lambda_1 )^\sigma}{ C(\lambda_1 )^{\sigma}}\right)\\
    = & \left(\frac{t_2}{C(\lambda_0)}+\frac{t_3A(\lambda_0)^q}{C(\lambda_0)^q},-\frac{t_2\zeta}{C(\lambda_0)}-\frac{t_3 B(\lambda_0)^q}{C(\lambda_0)^q}\right).
\end{align*}
Substituting $t_3=\frac{C(\lambda_0)^q}{C(\lambda_1 )^\sigma}$ into the above equality, equating the first coordinates yields
\[t_2=\frac{C(\lambda_0)}{C(\lambda_1 )^{\sigma}}\left(A(\lambda_1 )^\sigma - u_1^q-A(\lambda_0)^q\right),\]
while equating the second coordinates yields
\[t_2=\frac{C(\lambda_0)}{\zeta C(\lambda_1 )^\sigma}\left(-B(\lambda_0)^q-\zeta^\sigma u_1^q+B(\lambda_1 )^\sigma\right).\]
Equating these two expressions for $t_2$ implies 
\[\zeta\left(A(\lambda_1 )^\sigma - u_1^q-A(\lambda_0)^q\right)=-B(\lambda_0)^q-\zeta^\sigma u_1^q+B(\lambda_1 )^\sigma,
\]
%or equivalently, (an equation of $\lambda_0^{q^2}$ and $\lambda_1^{\sigma q}$)
or equivalently,
%\[\zeta\left(\zeta^\sigma(\zeta-\zeta^q)^{-\sigma}\lambda_1^{\sigma q}- u_1^q-\zeta^q(\zeta-\zeta^q)^{-q}\lambda_0^{q^2}\right)=-\zeta^{2q}(\zeta-\zeta^q)^{-q}\lambda_0^{q^2}-\zeta^\sigma  u_1^q+\zeta^{2\sigma}(\zeta-\zeta^q)^{-\sigma}\lambda_1^{\sigma q}\]
%Since $\zeta^\sigma=\zeta^q$ and $(\zeta-\zeta^q)^{-q}=-(\zeta-\zeta^q)^{-1}$, the above equation is equivalent to
\[\lambda_1^{q}=\lambda_0^{q^2}-(\zeta^{1-q}-1) u_1^q.\]
Since $L$ is perfect, taking the $q$-th root of both sides gives 
\[\lambda_1=\lambda_0^{q}-(\zeta^{q-1}-1) u_1 . 
\]
\item[(3)]  From the constant terms of \eqref{eq:compare}, it follows 
\[\left(-\frac{A(\lambda_1 )^\sigma u_1}{C(\lambda_1 )^\sigma},\frac{B(\lambda_1 )^\sigma u_1}{C(\lambda_1 )^\sigma}\right)=\left(t_1+t_2\frac{A(\lambda_0)}{C(\lambda_0)},-\zeta^q t_1-t_2\frac{B(\lambda_0)}{C(\lambda_0)}\right).\]
Equating the first coordinates, we obtain
\begin{align*}
t_1=&-\frac{A(\lambda_1 )^\sigma u_1}{C(\lambda_1 )^\sigma}-t_2\frac{A(\lambda_0)}{C(\lambda_0)}\\
=&\frac{1}{C(\lambda_1 )^\sigma}\left(-A(\lambda_1 )^\sigma u_1-A(\lambda_0)A(\lambda_1 )^\sigma+A(\lambda_0) u_1^q+A(\lambda_0)^{q+1}\right).
\end{align*}
Equating the second coordinates, we obtain
\begin{align*}
t_1=&-\frac{B(\lambda_1 )^\sigma u_1}{\zeta^q C(\lambda_1 )^\sigma}-t_2\frac{B(\lambda_0)}{C(\lambda_0)}\\
=&\frac{1}{\zeta C(\lambda_1 )^\sigma}\left(-\zeta^{1-q}B(\lambda_1 )^\sigma u_1+B(\lambda_0)^{q+1}+\zeta^{\sigma}B(\lambda_0) u_1^q-B(\lambda_0)B(\lambda_1 )^\sigma\right).
\end{align*}
Hence, we get
\[t_2=0,\quad t_1=-\frac{A(\lambda_1 )^\sigma u_1}{C(\lambda_1 )^\sigma}.
\]
\end{enumerate}

In conclusion, the isogeny relation between $\phi^{\lambda_0}$ and $\phi^{\sigma;\lambda_1 }$ is
\[
    \lambda_1=\lambda_0^{q}-(\zeta^{q-1}-1)u_1.
\]
\end{proof}
\subsection{Description of the primitive torsions}
We recall the useful lemma in \cite{MR4592575}*{Lemma 3.1.5}. 

\begin{lem}\label{lem:kernel}
    Let $ W $ be an $n$-dimensional $ \mathbb{F}_q$-vector space of $ \bar{K} $, then there exists a unique monic twisted polynomial $ \omega $ of $\tau$-degree $ n $ such that 
    $
        W = \ker \omega  .
    $ 
\end{lem}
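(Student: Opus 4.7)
The plan is to translate the statement into the language of $\mathbb{F}_q$-linear (additive) polynomials via the standard dictionary $\sum a_i \tau^i \leftrightarrow \sum a_i X^{q^i}$. Under this identification, a twisted polynomial of $\tau$-degree $n$ corresponds to an additive polynomial of degree $q^n$, which has at most $q^n$ roots in $\bar{K}$ and acts $\mathbb{F}_q$-linearly on $\bar{K}$. I will establish existence by induction on $n = \dim_{\mathbb{F}_q} W$ and deduce uniqueness from a simple degree comparison.

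For existence, the base case $n=0$ is trivial with $\omega = 1$. For the inductive step, fix any nonzero $v \in W$ and set $\omega_v := \tau - v^{q-1} \in \bar{K}\{\tau\}$. Viewed as an additive polynomial, $\omega_v(cv) = (c^q - c)\, v^q$ vanishes exactly when $c \in \mathbb{F}_q$, so $\ker \omega_v \cap W = \mathbb{F}_q \cdot v$. Hence $\omega_v$ restricts to a surjection from $W$ onto an $\mathbb{F}_q$-subspace $W' \subset \bar{K}$ of dimension $n-1$. By the induction hypothesis there exists a monic $\omega'$ of $\tau$-degree $n-1$ with $\ker \omega' = W'$, and then $\omega := \omega' \, \omega_v$ is monic of $\tau$-degree $n$ and vanishes on $W$. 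A cardinality comparison $q^n = |W| \leq |\ker \omega| \leq q^n$ forces $\ker \omega = W$.

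Uniqueness is then immediate: if $\omega_1, \omega_2$ are two monic twisted polynomials of $\tau$-degree $n$ with $W \subseteq \ker \omega_i$, then $\omega_1 - \omega_2$ has $\tau$-degree at most $n-1$ but vanishes on all $q^n$ elements of $W$; since its associated additive polynomial has at most $q^{n-1}$ roots in $\bar{K}$, it must be identically zero. There is no real obstacle in this proof; the only point requiring mild care is to verify that $\omega_v$ genuinely drops the dimension by exactly one, which is a one-line application of $\mathbb{F}_q$-linearity and rank-nullity. Everything else is a bookkeeping exercise once the additive-polynomial dictionary is in place.
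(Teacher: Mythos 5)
The paper does not prove this lemma at all; it is recalled as a citation (Lemma 3.1.5 of \cite{MR4592575}, Papikian's book) and used as a black box, so there is no in-paper argument to compare against. Your inductive proof is correct and self-contained: peeling off $\tau - v^{q-1}$ for a nonzero $v \in W$ kills exactly the line $\mathbb{F}_q v$, applying the inductive hypothesis to the image $W' = \omega_v(W)$ of dimension $n-1$ and left-composing gives a monic annihilator of $\tau$-degree $n$, and the cardinality bound $|\ker\omega| \leqslant q^n$ on additive polynomials both pins down $\ker\omega = W$ and, in the difference $\omega_1 - \omega_2$, forces uniqueness. This is the standard argument; the other common route is to write $\omega$ explicitly as a ratio of Moore determinants built from an $\mathbb{F}_q$-basis of $W$, which yields an explicit formula but requires more machinery, so your more elementary induction is perfectly reasonable here. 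One minor expositional point: the computation $\omega_v(cv) = (c^q - c)v^q$ only directly shows $\mathbb{F}_q v \subseteq \ker\omega_v$; the equality $\ker\omega_v = \mathbb{F}_q v$ (hence that the kernel restricted to $W$ has dimension exactly one) uses the same degree-$q$ root bound you invoke later, so it is worth saying explicitly, but this is a gap in exposition rather than in the mathematics.
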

If $W$ is identical to a $   I_{\infty}^n $-torsion $ G_n $ of Drinfeld module $\phi$, we obtain a stronger form for $\omega$. 
\begin{lem}\label{lem:primitive}
Given a primitive $   I_{\infty}^n $-torsion $ G_n $ of Drinfeld module $\phi$,  there is a sequence of nonzero elements $ u_1, u_2,\ldots, u_n$ in $ \bar{K}$ such that $ G_n $ is represented as  the kernel of the twisted polynomial
\begin{equation}\label{eq:Omegan}
    \omega_n = (\tau - u_n) \cdots (\tau - u_2 ) (\tau - u_1).
\end{equation}

\end{lem}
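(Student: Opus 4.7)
The plan is to argue by induction on $n$, peeling off one linear factor from the right at each stage via the filtration $G_n \supset I_\infty G_n$. For the base case $n = 1$, a primitive $I_\infty$-torsion $G_1 \cong \A/I_\infty$ is one-dimensional over $\mathbb{F}_q$, so fixing a nonzero generator $\mu_1$ and setting $u_1 = \mu_1^{q-1}$ yields a nonzero $u_1$ with $G_1 = \ker(\tau - u_1)$, as required.

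For the inductive step the key structural input is that $I_\infty \cdot G_n$ is again primitive, now of level $n-1$. This follows from Dedekind domain theory: localising $\A$ at the prime $I_\infty$ gives a DVR in which $I_\infty$ is principal, generated by some uniformiser $\pi$, and multiplication by $\pi$ induces an $\A$-module isomorphism $I_\infty/I_\infty^n \cong \A/I_\infty^{n-1}$; this transports to $I_\infty G_n \cong \A/I_\infty^{n-1}$. The induction hypothesis then supplies nonzero $u_1, \ldots, u_{n-1} \in \bar{K}$ with
\[
I_\infty G_n = \ker \omega_{n-1}, \qquad \omega_{n-1} = (\tau - u_{n-1}) \cdots (\tau - u_1).
\]

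I then build $u_n$ by viewing $\omega_{n-1}$ as an $\mathbb{F}_q$-linear map on $G_n$. Its kernel is $G_n \cap \ker \omega_{n-1} = I_\infty G_n$ of $\mathbb{F}_q$-dimension $n-1$, while $\dim_{\mathbb{F}_q} G_n = n$, so $\omega_{n-1}(G_n)$ is a line $\mathbb{F}_q \cdot \nu \subset \bar{K}$ with $\nu \neq 0$. Setting $u_n = \nu^{q-1}$ makes $\tau - u_n$ vanish on this line, so $\omega_n := (\tau - u_n)\omega_{n-1}$ kills every element of $G_n$; since $\omega_n$ has $\tau$-degree $n$, its kernel has $\mathbb{F}_q$-dimension at most $n$, forcing $G_n = \ker \omega_n$ and closing the induction.

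The main obstacle is the structural claim that $I_\infty G_n$ is itself primitive of level $n-1$. The localisation argument above is short but essential: without using the invertibility of $I_\infty$ in the Dedekind domain $\A$, one cannot identify the $\A$-module structure of $I_\infty G_n$, and the descent step in the induction would not go through cleanly. Everything else — the choice of $u_n$, the factorisation $(\tau - u_n)\omega_{n-1}$, and the identification $\ker \omega_n = G_n$ — reduces to dimension counting in $\bar{K}$ combined with the uniqueness in Lemma~\ref{lem:kernel}.
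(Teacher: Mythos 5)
Your proposal is correct and takes essentially the same route as the paper: both exploit the filtration $G_i := I_\infty^{n-i} G_n$ and peel off one $\mathbb{F}_q$-linear factor per step. The paper constructs the whole chain $G_0 \subsetneq \cdots \subsetneq G_n$ at once and invokes right-divisibility of twisted polynomials to split $\omega_{i+1} = (\tau - u_{i+1})\omega_i$, whereas you run the induction top-down and build $u_n$ explicitly from the image line $\omega_{n-1}(G_n)$; your localisation argument also supplies the justification that $I_\infty G_n$ is primitive of level $n-1$, a point the paper merely asserts.
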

\begin{proof}

Fix an integer $ n \geqslant 0$, and set $G_i = I_\infty^{n -i} G_n   $ for $ i = 0, 1, \ldots, n $.
We obtain the sequence 
\[
     0 =  G_0  \subseteq G_1  \subseteq G_2 \subseteq G_3 \subseteq \cdots \subseteq G_n. 
\]
Since $ G_n $ is primitive, there is no doubt that $ G_i   $ is a primitive $I^i_\infty$-torsion of $ \phi $. We know from Lemma \ref{lem:kernel} that $ G_i $ is given by the kernel of some twisted polynomial $\omega_i$. In particular, $ \omega_0 = 1 $. 
Since $ G_i \subsetneq G_{i+1} $, we know $\omega_{i} $ is a right-divisor of $\omega_{i+1} $, i.e., 
\[
\omega_{i+1} = (\tau - u_{i+1}) \omega_i.
\]
Therefore, we have the expression \eqref{eq:Omegan}. 
\end{proof}
Indeed, the variables $ u_1, \ldots, u_n $ can be expressed by the basis of $G_n$. 
Let us fix the primitive $   I_{\infty}^n $-torsion $ G_n $ of a Drinfeld $\A$-module $ \phi $. By Lemma \ref{lem:primitive}, we know there exist $u_1,\ldots, u_n$, such that 
\begin{equation}\label{eq:Gk} 
    G_n = \ker \omega_n = \ker (\tau - u_n) \cdots (\tau - u_2 ) (\tau - u_1). \end{equation}
Suppose that $ \alpha_{1} , \ldots, \alpha_n $ is the basis of $G_n $ such that $ \alpha_{i} $ is contained in $ G_{i} \setminus G_{i-1} $. Then $\alpha_i $ satisfies 
\[
    \omega_{i}(\alpha_{i} ) = (\tau - u_{i} ) \omega_{i-1} (\alpha_{i})= 0 
\]
and 
\[
\omega_{i-1}(\alpha_{i} ) \not= 0 .
\]
This implies that 
\begin{equation}\label{eq:uiomega}
    u_{i} = \omega_{i-1}(\alpha_{i} )^{q-1}. 
\end{equation}

\subsection{Iterative formula of isogeny}
Let $ u_i , \alpha_i , \omega_i $ be defined as above associated to the primitive $I_\infty$-torsion $G_n$ of the  $\zeta^{q}$-type normalized Drinfeld module $\phi^{\lambda_0}$. 
\begin{lem}\label{lem:isogeny}
  Maintain the notations above. Suppose that $ \lambda_k $ with $  1 \leqslant k \leqslant n$ is defined by the iterative formula
     \begin{equation}\label{eq:lambdaiui}
           \lambda_k = \lambda_{k-1}^q-(\zeta^{q^k-{q^{k-1}}}-1) u_k.
     \end{equation}
      Then   
     \begin{equation}\label{eq:phivanish}
        \phi^{\sigma^{k-1}; \lambda_{k-1}}_{I_\infty}(\omega_{k-1 } (\alpha_{k})) =0  ,
     \end{equation}
     or equivalently, $u_k $ and $ \lambda_{k-1}$ satisfy 
      \begin{equation}\label{eq:xi}
     \xi^{\sigma^{k-1}; \lambda_{k-1}} (u_{k}) := u_{k}^{q+1}+ \left(\frac{\lambda_{k-1}^{q^2}}{ 1-\zeta^{q^{k-1}-q^{k}} }+\frac{\nu^{\sigma^{k-1}} }{\zeta^{q^{k-1}} T^{\sigma^{k-1}}\lambda_{k-1} }\right) u_{k} + \nu^{\sigma^{k-1}}  \lambda_{k-1}^{q-1} = 0 . 
      \end{equation}
      Moreover, the twisted polynomial $\omega_k$ represents an isogeny from $ \phi^{\lambda_0} $ to $ \phi^{\sigma^k; \lambda_k} $, namely,
     \begin{equation}\label{eq:Omega_k}
    \omega_k \phi^{\lambda_0} = \phi^{\sigma^k; \lambda_k} \omega_k . 
     \end{equation}
   
\end{lem}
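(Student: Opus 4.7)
The plan is to prove all three claims simultaneously by induction on $k$, with the base case $k=1$ being essentially Lemma~\ref{lem:relation-lam} itself, and the inductive step being a clean reduction to a $\sigma^{k-1}$-shifted version of that same lemma. The three assertions are organized so that (\ref{eq:phivanish}) $\Leftrightarrow$ (\ref{eq:xi}) is a short algebraic manipulation, and (\ref{eq:xi}) together with Lemma~\ref{lem:relation-lam} applied to $\phi^{\sigma^{k-1};\lambda_{k-1}}$ yields (\ref{eq:Omega_k}).

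\textbf{Base case.} For $k=1$ we have $\omega_0 = 1$, so $\omega_0(\alpha_1) = \alpha_1 \in G_1 = \phi^{\lambda_0}[I_\infty]$, which gives (\ref{eq:phivanish}). Using $u_1 = \alpha_1^{q-1}$ from \eqref{eq:uiomega} and dividing the identity $\phi_{I_\infty}^{\lambda_0}(\alpha_1) = 0$ by $\alpha_1$, we recover exactly Equation~\eqref{eq:u1lambda0}, i.e. $\xi^{\lambda_0}(u_1) = 0$. The isogeny relation (\ref{eq:Omega_k}) and the value of $\lambda_1$ are then given by Lemma~\ref{lem:relation-lam} verbatim.

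\textbf{Inductive step.} Assume the lemma holds for $k-1$, so that $\omega_{k-1}\phi_a^{\lambda_0} = \phi_a^{\sigma^{k-1};\lambda_{k-1}}\omega_{k-1}$ for every $a\in\A$. Set $\beta_k := \omega_{k-1}(\alpha_k)$; since $\alpha_k \in G_k\setminus G_{k-1}$ and $G_{k-1} = \ker\omega_{k-1}$, we have $\beta_k \neq 0$, and the factorization $\omega_k = (\tau-u_k)\omega_{k-1}$ combined with $\alpha_k\in\ker\omega_k$ gives $u_k = \beta_k^{q-1}$. The key observation is that $I_\infty\cdot G_k \subseteq G_{k-1}$, so for any $a\in I_\infty$ the intertwining identity yields
\[
\phi_a^{\sigma^{k-1};\lambda_{k-1}}(\beta_k) = \omega_{k-1}\bigl(\phi_a^{\lambda_0}(\alpha_k)\bigr) \in \omega_{k-1}(G_{k-1}) = 0.
\]
Taking $a$ to range over $I_\infty$ and using that $\phi_{I_\infty}^{\sigma^{k-1};\lambda_{k-1}}$ is the greatest common right-divisor of the $\phi_a^{\sigma^{k-1};\lambda_{k-1}}$, we obtain (\ref{eq:phivanish}). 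Substituting the explicit form of $\phi_{I_\infty}^{\sigma^{k-1};\lambda_{k-1}} = \tau^2 + \alpha^{\sigma^{k-1}}\tau + \nu^{\sigma^{k-1}}\lambda_{k-1}^{q-1}$ (the $\sigma^{k-1}$-shift of \eqref{eq:phiIinf}), dividing by $\beta_k$, and replacing $\beta_k^{q-1}$ by $u_k$ recovers exactly Equation~\eqref{eq:xi}.

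\textbf{Conclusion of induction.} Equation~\eqref{eq:xi} is precisely the hypothesis needed to apply the $\sigma^{k-1}$-shift of Lemma~\ref{lem:relation-lam} to the pair $(\phi^{\sigma^{k-1};\lambda_{k-1}},\phi^{\sigma^{k};\lambda_{k}})$; indeed the exponent $\zeta^{q^k-q^{k-1}}-1 = \zeta^{(q-1)q^{k-1}}-1$ appearing in \eqref{eq:lambdaiui} is exactly the $\sigma^{k-1}$-image of $\zeta^{q-1}-1$ in \eqref{eq:lambda1}. This gives both that $\tau - u_k$ is an isogeny from $\phi^{\sigma^{k-1};\lambda_{k-1}}$ to $\phi^{\sigma^k;\lambda_k}$ with the prescribed $\lambda_k$, and, by composition with the inductive isogeny $\omega_{k-1}$, the identity \eqref{eq:Omega_k} for $\omega_k = (\tau-u_k)\omega_{k-1}$.

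\textbf{Anticipated difficulty.} The only delicate point is the bookkeeping for the $\sigma$-twists: one must verify that Lemma~\ref{lem:relation-lam}, which is stated for a $\zeta^q$-type module transitioning to a $\zeta$-type module, applies after a $\sigma^{k-1}$-shift (so that the roles of $\zeta$ and $\zeta^q$ alternate correctly and the coefficient $\zeta^{q-1}-1$ becomes $\zeta^{q^k-q^{k-1}}-1$). Since $\sigma^2 = \id$ the two cases $k$ even and $k$ odd both reduce to the statement of Lemma~\ref{lem:relation-lam} up to relabeling of $\zeta$, so no new computation is required; the substantive work is entirely contained in the $k=1$ case already carried out in Lemma~\ref{lem:relation-lam}.
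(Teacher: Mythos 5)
Your proof is correct and follows essentially the same route as the paper's: induction on $k$ with base case given by Lemma~\ref{lem:relation-lam}, the inclusion $I_\infty\cdot G_k\subseteq G_{k-1}$ plus the inductive intertwining identity to get \eqref{eq:phivanish}, and the $\sigma^{k-1}$-twist of Lemma~\ref{lem:relation-lam} to get \eqref{eq:Omega_k}. The only cosmetic differences are that you introduce the temporary notation $\beta_k=\omega_{k-1}(\alpha_k)$ and re-derive $u_k=\beta_k^{q-1}$ inline (which the paper records as \eqref{eq:uiomega}), and you explicitly flag the $\sigma$-twist bookkeeping for the coefficient $\zeta^{q^k-q^{k-1}}-1$, which the paper handles implicitly.
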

\begin{proof}
    For the case $ k =1 $, the equality \eqref{eq:phivanish} just says that $ \alpha_1 $ is contained in $G_1$, which has been investigated in Section \ref{sec:IsogenyFormula}. Since $u_1 = \omega_0(\alpha_1)^{q-1} = \alpha_1^{q-1}$, we know from Section \ref{sec:IsogenyFormula} that $u_1$ and $\lambda_0 $ automatically satisfy Equation \eqref{eq:u1lambda0}, i.e., $\xi^{\lambda_0}(u_1) =0 $. Equation \eqref{eq:Omega_k} with $k=1$ has been shown in Lemma \ref{lem:relation-lam}.

     Moreover, we can deduce the equivalence of \eqref{eq:phivanish} and \eqref{eq:xi} by the equality \eqref{eq:uiomega}
     for each $k \leqslant n $.

Next, we assume by induction that the lemma  is valid for $ 1\leqslant k \leqslant i $. It suffices to prove the case $ k = i+1 $.  
By defintion, $ G_{i+1 } $ is a primitive $ I_{\infty}^{i+1} $-torsion of $ \phi^{\lambda_0}$.  For $ z \in I_\infty$, we have 
\begin{equation}\label{eq:GiCon1}
\phi_{z}^{\lambda_0} G_{i+1} \subseteq G_{i} .
\end{equation}
From Equation \eqref{eq:Gk}, it yields that $ \phi_{z}^{\lambda_0}(\alpha_{i+1}) $ is always annihilated by $ \omega_i $. Equation \eqref{eq:Omega_k} with $k = i$ implies that 
\begin{equation}\label{eq:lambdaCon1}
0 = \omega_{i}(\phi_{z}^{\lambda_0}(\alpha_{i+1}) ) =\phi_{z}^{\sigma^i; \lambda_i}(\omega_{i}\alpha_{i+1})  \quad \text{for all $z \in I_\infty $}. 
\end{equation}
Note that $\phi_{I_\infty}$ is the annihilator of ideal $I_\infty$. Thus, \eqref{eq:lambdaCon1} is equivalent to 
\[
 \phi_{I_\infty}^{\sigma^i; \lambda_i}(\omega_{i}\alpha_{i+1}) = 0. 
\]
This verifies the Equation \eqref{eq:phivanish} with $ k = i+1 $.

On the other hand, applying the $k$-th $\sigma$-action in Lemma \ref{lem:relation-lam}, the relation of $\lambda_i$ and $\lambda_{i+1}$ gives that
\begin{equation}\label{eq:lambdai}
(\tau-u_{i+1})\phi^{\sigma^{i};\lambda_{i}}=\phi^{\sigma^{i+1};\lambda_{i+1}}(\tau-u_{i+1}).
\end{equation}
Therefore,
\begin{align*}
\omega_{i+1}\phi^{\lambda_0}=&(\tau-u_{i+1}) \omega_i \phi^{\lambda_0}\\
=&(\tau-u_{i+1})\phi^{\sigma^i; \lambda_i} \omega_i \\
=&\phi^{\sigma^{i+1};\lambda_{i+1}} (\tau-u_{i+1})\omega_i  \quad \text{ By \eqref{eq:lambdai}}
\\
=&\phi^{\sigma^{i+1};\lambda_{i+1}}  \omega_{i+1}.
\end{align*}
The equality \label{eq:Omegak} holds for $ k = i +1 $. 
\end{proof}

\subsection{Restrictions on primitive $ I^{i+1}_\infty$-torsion}\label{sec:Restriction}
Next, we need to derive the restriction on the variables $u_{i+1}$ from the information on $ G_n $, a primitive $ I_{\infty}^{i+1} $-torsion of $ \phi^{\lambda_0} $.
In our assumption, $ G_{i+1 } $ shall be a primitive $ I_{\infty}^{i+1} $-torsion. It splits into two conditions:
\begin{enumerate}
  \item  For $ z \in I_\infty$, $\phi_{z}^{\lambda_0}( G_{i+1})$ is contained in $ G_{i}$, i.e., Equation \eqref{eq:lambdaCon1} holds;
 \item $ \phi_{z}^{\lambda_0} (G_{i+1})$ is not always contained in $G_{i-1}$. 
\end{enumerate}

From the proof of lemma \ref{lem:isogeny}, we know 
the first condition is equivalent to
\[ 
\xi^{\sigma^i; \lambda_i} (u_{i+1})=0 .
\]

The second condition requires that $ \phi_{z}^{\lambda_0} (\alpha_{i+1}) $ is not always annihilated by $ \omega_{i-1} $. Equivalently, 
\begin{equation}\label{eq:NotZero}
\phi_{I_\infty}^{\sigma^{i-1}; \lambda_{i-1}} \omega_{i-1}(\alpha_{i+1}) \neq  0.
\end{equation}
The equality \eqref{eq:phivanish} with $k= i-1$ says that $\omega_{i-1}(\alpha_{i})  $ is annihilated by $ \phi_{I_\infty}^{\sigma^{i-1}; \lambda_{i-1}} $. 
This yields that $(\tau-u_i)$ is a right-divisor of $ \phi_{I_\infty}^{\sigma^{i-1}; \lambda_{i-1}} $. In fact, we obtain the decomposition
\[ 
\phi_{I_\infty}^{\sigma^{i-1};\lambda_{i-1}}=\left(\tau- u_i^{\nabla}  \right)(\tau-u_i),
\]
where $u_i^{\nabla} := \frac{\nu^ {\sigma^{i-1}}\lambda_{i-1}^{q-1}}{u_i}. $
Thus, the left hand side of \eqref{eq:NotZero} equals 
\begin{align*}
  \phi_{I_\infty}^{\sigma^{i-1}; \lambda_{i-1}} \omega_{i-1}(\alpha_{i+1})&= \left(\tau-u_i^{\nabla} \right)(\tau-u_i)(\omega_{i-1}(\alpha_{i+1}))\\
  & =\left(\tau-u_i^{\nabla} \right) (\omega_{i} (\alpha_{i+1}))\\
  & = \omega_{i} (\alpha_{i+1})\cdot \left( u_{i+1}- u_i^{\nabla} \right).
\end{align*}
Since $\omega_{i} (\alpha_{i+1})\neq 0$,  Equation \eqref{eq:NotZero} is the same as 
$
u_{i+1} \neq u_i^{\nabla}.
$
From the argument above, we see that $u_{i+1} =u_i^{\nabla}$ must be a root of $ \xi^{\sigma^i; \lambda_i} (u_{i+1})$. 
Indeed, it is straightforward to check that polynomial $\xi^{\sigma^i; \lambda_i}(u_{i+1})$ admits a decomposition
\begin{equation*}
   \xi^{\sigma^i; \lambda_i}(u_{i+1})
    = \xi_\nabla^{ \sigma^i; \lambda_i} (u_{i+1}) \cdot \left(u_{i+1}- u_i^{\nabla}\right) ,
\end{equation*}
where 
\begin{align}
     \xi_\nabla^{ \sigma^i; \lambda_i} (u_{i+1})  = &  \frac{\lambda_i^{ q^2}}{1-\zeta^{(1-q)q^i}}+\frac{\nu^{\sigma^{i}}}{T^{\sigma^i} \lambda_i\zeta^{q^i}}+\sum_{s=0}^{q} \left(u_i^{\nabla}\right)^s u_{i+1}^{q-s} \nonumber \\
 = &-\lambda_i^{q-1}\frac{\nu^{\sigma^i} u_i}{\nu^{\sigma^{i-1}}\lambda_{i-1}^{q-1}}+\sum_{s=0}^{q-1} \left(u_i^{\nabla}\right)^{s}u_{i+1}^{q-s} \label{eq:exlambdai}.
\end{align}
In conclusion, the two conditions on $G_{i+1}$ together are equivalent to 
\[
 \xi_\nabla^{ \sigma^i; \lambda_i} (u_{i+1}) =0 .
\]

\subsection{Proof of Theorem \ref{thm:thmA}}
Now we are able to state the proof of Theorem \ref{thm:thmA}.
\begin{proof}[Proof of Theorem \ref{thm:thmA}]
For the case $k=0 $, the theorem just says that $\lambda_0 $ represents the complete family of $\zeta^q$-type normalized $\A$-Drinfeld module, which has been investigated in Theorem \ref{thm:normalized}. 

For $k \geqslant 1 $, from the definition of Drinfeld modular curves, it suffices to show that 
$ \lambda_0, \ldots , \lambda_k $ is in one-to-one correspondence with primitive $I_\infty^k$-torsions of $ \phi^{\lambda_0 } $. 

The case $ k =1 $ can be easily checked that $ (\lambda_0, u_1) $ corresponds to the  primitive $I_\infty $-torsion
\[
    G_1 = \ker(\tau - u_1) 
\]
where $ u_1 $ verifies $ \xi^{\lambda_0}(u_1) = 0 $. 
Indicated by Equation \eqref{eq:lambda1}, we choose transformation:
\[
    u_1 := \frac{\lambda_0^q - \lambda_1}{\zeta^{q-1}-1}.
\]

Substituting $ u_1 $ into $\xi^{\lambda_0}$ it yields that 
\[
\xi^{\lambda_0} \left(\frac{\lambda_0^q - \lambda_1}{\zeta^{q-1}-1}\right) =0 , 
\]
i.e., 
\[\lambda_1^{q+1}-\lambda_0^{q}\lambda_1^{ q}-\frac{\zeta^{1-q}-1}{\zeta T\lambda_0}\nu\lambda_1+\left(\frac{\zeta^{1-q}-1}{\zeta T}+(\zeta^{q-1}-1)^{q+1}\right)\nu\lambda_0^{q-1}=0.
\]
So $ (\lambda_0, \lambda_1) $ is in one-to-one correspondence with primitive $ I_\infty $-torsion.

Next, we assume that $ k\geqslant 2 $. Given the primitive $I_\infty^{k}$-torsion $G_k$, we construct $ u_i, \lambda_i $ as above. In Section \ref{sec:Restriction}, we have realized that it is equivalent to 
\begin{equation}\label{eq:xinabla}
 \xi_\nabla^{\sigma^i; \lambda_i }(u_{i+1}) = 0,  
\end{equation}
for $i =0, \ldots, k-1$. 
It follows from the equality \eqref{eq:lambdaiui} of Lemma \ref{lem:isogeny} that   
 \[
u_{i+1} :=\frac{\lambda_{i}^q-\lambda_{i+1} }{ \zeta^{q^{i+1}-q^{i}}-1   }
\]
and 
\[
u_i :=\frac{\lambda_{i-1}^q-\lambda_i }{ \zeta^{q^{i}-q^{i-1}}-1  }.
\]
Combining these with Equation \eqref{eq:xinabla},  we have 
\begin{align}
    \xi_\nabla^{\sigma^i; \lambda_i }(u_{i+1}) & =-\lambda_i^{q-1}\frac{\nu^{\sigma^i} (\lambda_{i}-\lambda_{i-1}^{q})}{\nu^{\sigma^{i-1}}\lambda_{i-1}^{q-1}} \nonumber \\
    & +\sum_{s=0}^{q-1} \left(\frac{\nu^{\sigma^{i-1}}\lambda_{i-1}^{q-1}(1-\zeta^{1-q})^{q+1}}{ \lambda_{i}-\lambda_{i-1}^{q} }\right)^{s}(\lambda_{i+1}-\lambda_{i}^{q})^{q-s} =0 .\label{eq:conlambda}
\end{align}

Conversely, we can reconstruct the  $I_\infty^i$-primitive torsions $G_i$ through $ \lambda_i $ as follows:
\begin{align*}
    G_i &= \ker(\omega_i ) \\
        &= \ker \left(\tau -\frac{\lambda_{i}-\lambda_{i-1}^{q}}{1-\zeta^{(1-q)q^{i-2}}} \right)\left(\tau -\frac{\lambda_{i-1}-\lambda_{i-2}^{q}}{1-\zeta^{(1-q)q^{i-3}}}\right)\cdots\left(\tau- \frac{\lambda_{1}-\lambda_{0}^{q}}{1-\zeta^{(1-q)q^{-1}}} \right)
\end{align*}
where $\lambda_0, \ldots, \lambda_i $ verifies the recursive condition \eqref{eq:conlambda}. 
\end{proof}
%\begin{align*}
%&\lambda_i^{\sigma^i(q-1)}\frac{\nu^{\sigma^i} (\lambda_{i-1}^{q \sigma^{i-1}}-\lambda_{i}^{\sigma^{i}})}{\nu^{\sigma^{i-1}}\lambda_{i-1}^{(q-1)\sigma^{i-1}}(\zeta^{(1-q)q^{i-2}}-1)}\\
%=&\sum_{s=0}^{q-1} \left(\frac{\nu^{\sigma^{i-1}}\lambda_{i-1}^{(q-1)\sigma^{i-1}}(\zeta^{(1-q)q^{i-2}}-1)}{(\lambda_{i-1}^{q \sigma^{i-1}}-\lambda_{i}^{\sigma^{i}})}\right)^{s}((\lambda_{i}^{q \sigma^{i}}-\lambda_{i+1}^{\sigma^{i+1}})(\zeta^{(1-q)q^{i-1}}-1)^{-1})^{q-s}
%\end{align*}

\section{Minimal Drinfeld modular tower}\label{sec:thmB}

\subsection{The modular curve $ \x(I_\infty) $}\label{sec:modularCurve1}
We start with a minimal Drinfeld module $ \Phi^{j_0} $ parameterized by the $j$-invariant $ j_0 $. We know from Section \ref{sec:minial} that a $(q-1)$-th root of $ \lambda_0^q $ gives an isogeny from $ \Phi^{j_0} $ to $ \phi^{\lambda_0 }$, 
where $\lambda_0$ satisfies
\begin{equation}\label{eq:j0} j_0 =\frac{ \lambda_0^{q^2+1}  }{\nu }. 
\end{equation} 
Suppose that $ \beta_1 \not= 0 $ is contained in $ \Phi^{j_0}[I_\infty] $, i.e., $ \Phi^{j_0}_{I_\infty}(\beta_1) = 0 $. Then the space $ G_1:=\mathbb{F}_{q} \beta_1$ is a primitive $I_\infty$-torsion of $ \Phi^{j_0} $. From the isogeny $\ell_0$, we find that the element $ \alpha_1 := \ell_0 \beta_1 $ is annihilated by $ \phi_{I_\infty} $, and hence 
\begin{equation}\label{eq:PhiIinf}
    \Phi_{I_\infty}^{j_0} = \ell_0^{-q^2} \phi_{I_\infty}^{\lambda_0} \ell_0.
\end{equation}
Substituting the expression \eqref{eq:phiIinf}, we obtain 
\begin{equation}\label{eq:jinv}
      \Phi_{I_\infty}^{j_0} = \tau^2+\left(\frac{1}{1-\zeta^{1-q}}+\frac{1}{\zeta T}\frac{1}{j_0}\right)\tau +\frac{1}{j_0}  . 
\end{equation}
This equality can also be deduced from the expression of $\Phi^{j_0}$, as $\Phi_{I_\infty}^{j_0} $ is exactly the right-divisor of both $\Phi_x^{j_0}$ and $ \Phi_y^{j_0}$.

Take $ w_1 = \beta_1^{q-1}$. Then applying \eqref{eq:jinv} yields that 
\begin{equation}\label{eq:w1}
    \Xi^{j_0}(w_1):= w_1^{q+1} + \left(\frac{1}{1-\zeta^{1-q}}+\frac{1}{\zeta T}\frac{1}{j_0}\right)w_1 +\frac{1}{j_0}  =\frac{1}{\beta_1}\Phi_{I_\infty}^{j_0}(\beta_1)= 0 . 
\end{equation}

Since $G_1 $ can be expressed as 
$
    \ker(\tau - w_1)
$, we find that the function field of the modular curve $ \x(I_\infty) $ is  $ H (j_0,w_1) $, 
where $ j_0, w_1 $ are subject to \eqref{eq:w1}. 
Furthermore, from the equation \eqref{eq:w1}, the $j$-invariant of $ \Phi^{j_0} $ is determined by $w_1$ of the form  
\begin{equation}\label{eq:jw}
    j_0=\frac{-(1+\zeta^{-1}T^{-1} \beta_1^{q-1})}{\beta_1^{q^2-1}+(1-\zeta^{1-q})^{-1}\beta^{q-1}}=\frac{( \zeta^{1-q}-1)(1+\zeta^{-1}T^{-1}w_1)}{ w_1\left( 1+ (1-\zeta^{1-q})w_1^{q} \right) }.
\end{equation}
Thus the function field of the modular curve $ \x(I_\infty) $ is rational and is generated by $ w_1 $.

%   \subsection{For $I_\infty$-primitive torsion}
% Recall 
% the $j$-invariant of $ \phi^\lambda $ is given by 
% \[
%     j (\phi^\lambda)= \frac{\lambda^{q^2+1}}{\nu}. 
% \]
% Accordingly, the $ j $-invariant of $ \phi^{\sigma^i; \lambda' }$ is given by 
% \[
%     j(\phi^{\sigma^i; \lambda' }) = \frac{\lambda^{\prime q^2+1}}{\nu^{\sigma^i}}
% \]

\subsection{Isogeny formula for  minimal Drinfeld modules}
In the rest, we always assume that $w_1 $ and $j_0$ satisfy the equality \eqref{eq:jw}. 
We have seen that the isogeny relation 
\begin{equation}\label{Eq:isogeny2}
(\tau-u_1)\phi^{\lambda_0}=\phi^{ \sigma; \lambda_1}(\tau-u_1)
\end{equation}
is essential in the construction of normalized Drinfeld modular tower. We now derive the analogous relation for $ \Phi^{j_0}$. 
\begin{lem}\label{lem:isogenyPhi}
    Suppose that $w_1 $ is a root of the equation $ \Xi^{j_0}(w_1) = 0 $.
    Let $\delta_1 $ be a $(q-1)$-th root of the constant $(1 - (\zeta^{1-q}-1)w_1^q )^{-1}$. Then the twisted polynomial 
    \[ \Omega_1:=\delta_1 (\tau - w_1)
    \]
     represents an isogeny from $ \Phi^{j_0} $ to $ \Phi^{\sigma; j_1 }$, where $ j_1 $ is determined by 
    \begin{align}
        j_1 &=  T^{q-1} j_0^q \left( 1 +(1-\zeta^{q-1})   w_1 \right)^{q^2+1}  \label{eq:j0j1} \\
     & = T^{-1} (\zeta^{q-1}-1)   \left( 1 +(1-\zeta^{q-1})   w_1 \right)   (T^q w_1^{-q}+\zeta^{-q} ) .\label{eq:j0w1}
    \end{align}
\end{lem}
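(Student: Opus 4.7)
The plan is to obtain $\Omega_1$ as the composition of known isogenies, reading off the right-hand side of the commutative triangle
\[
\Phi^{j_0} \xrightarrow{\ \ell_0\ } \phi^{\lambda_0} \xrightarrow{\ \tau - u_1\ } \phi^{\sigma;\lambda_1} \xleftarrow{\ \ell_1\ } \Phi^{\sigma;j_1},
\]
where $\ell_0$ and $\ell_1$ are the scalars with $\ell_0^{q-1}=\lambda_0^{q}$ and $\ell_1^{q-1}=\lambda_1^{q}$ coming from Section~\ref{sec:minial}. The isogeny from $\Phi^{j_0}$ to $\Phi^{\sigma;j_1}$ will therefore be $\ell_1^{-1}(\tau - u_1)\ell_0$; pushing $\ell_0$ across $\tau$ via the rule $\tau\ell_0 = \ell_0^{q}\tau$ and using that scalars commute gives $\ell_1^{-1}\ell_0^{q}\bigl(\tau - \ell_0^{1-q}u_1\bigr)$. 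So I would set $\delta_1 := \ell_1^{-1}\ell_0^{q}$ and must then identify $\ell_0^{1-q}u_1$ with $w_1$.

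The identification of $w_1$ follows from the chain $\alpha_1 = \ell_0\beta_1$ established in Section~\ref{sec:modularCurve1}, whence $u_1 = \alpha_1^{q-1} = \ell_0^{q-1}\beta_1^{q-1} = \lambda_0^{q}w_1$; multiplying by $\ell_0^{1-q}=\lambda_0^{-q}$ gives exactly $w_1$. To compute $\delta_1^{q-1}$, I would substitute the explicit formula $\lambda_1 = \lambda_0^{q} - (\zeta^{q-1}-1)u_1$ from Lemma~\ref{lem:relation-lam}, re-express it as $\lambda_1 = \lambda_0^{q}\bigl(1-(\zeta^{q-1}-1)w_1\bigr)$, raise to the $q$-th power (using that the characteristic divides $q$ so that $(a+b)^q = a^q+b^q$), and invoke the crucial identity $\zeta^{q^2-q}=\zeta^{1-q}$ (coming from $\zeta^{q^2}=\zeta$). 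This yields
\[
\delta_1^{q-1} \;=\; \frac{\ell_0^{q(q-1)}}{\ell_1^{q-1}} \;=\; \frac{\lambda_0^{q^2}}{\lambda_1^{q}} \;=\; \bigl(1-(\zeta^{1-q}-1)w_1^{q}\bigr)^{-1},
\]
as required.

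For the first expression \eqref{eq:j0j1} of $j_1$, I would apply the formula $j_1 = \lambda_1^{q^2+1}/\nu^{\sigma}$ from Section~\ref{sec:minial} (with $\sigma$-type modules), substitute $\lambda_1 = \lambda_0^{q}(1+(1-\zeta^{q-1})w_1)$, and use the two normalizations $\lambda_0^{q^2+1}=j_0\nu$ together with $\nu^{q} = T^{q-1}\nu^{\sigma}$ (the latter coming from \eqref{eq:nusigma}). Everything cancels cleanly to give $j_1 = T^{q-1}j_0^{q}(1+(1-\zeta^{q-1})w_1)^{q^2+1}$.

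The mildly delicate step, and the only place where the defining relation $\Xi^{j_0}(w_1)=0$ enters, is deducing \eqref{eq:j0w1} from \eqref{eq:j0j1}. Setting $r=1+(1-\zeta^{q-1})w_1$, the equality of the two expressions is equivalent to $(Tj_0 r^{q})^{q} = (\zeta^{q-1}-1)(T^{q}w_1^{-q}+\zeta^{-q})$. I would compute $r^{q} = 1+(1-\zeta^{1-q})w_1^{q}$ (again using $\zeta^{q^2-q}=\zeta^{1-q}$), then use the reformulation of \eqref{eq:w1} displayed in \eqref{eq:jw}, namely $j_0 w_1(1+(1-\zeta^{1-q})w_1^{q}) = (\zeta^{1-q}-1)(1+\zeta^{-1}T^{-1}w_1)$, to obtain $Tj_0 r^{q} = (\zeta^{1-q}-1)(T+\zeta^{-1}w_1)w_1^{-1}$, and finish by raising to the $q$-th power and applying $\zeta^{q-q^2}=\zeta^{q-1}$. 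This last step is the main computational obstacle but is purely algebraic.
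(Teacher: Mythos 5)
Your proposal is correct and follows the same line of reasoning as the paper's proof: both start from the isogeny square $\Phi^{j_0}\to\phi^{\lambda_0}\to\phi^{\sigma;\lambda_1}\leftarrow\Phi^{\sigma;j_1}$, read off $\Omega_1=\ell_1^{-1}(\tau-u_1)\ell_0=\delta_1(\tau-w_1)$ by commuting $\ell_0$ across $\tau$, compute $\delta_1^{q-1}=\lambda_0^{q^2}/\lambda_1^{q}$ via Frobenius, derive \eqref{eq:j0j1} from $j_1=\lambda_1^{q^2+1}/\nu^{\sigma}$ and $\nu^{\sigma}=T^{1-q}\nu^{q}$, and then use \eqref{eq:jw} to pass to \eqref{eq:j0w1}. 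The only cosmetic difference is in the last step: the paper introduces $w_1^{\nabla}$ and the identity $\delta_1^{q-1}/(w_1^{\nabla}w_1)=j_0$ to rewrite $j_1$ forward, whereas you reduce the identity of the two expressions to $(Tj_0 r^{q})^{q}=(\zeta^{q-1}-1)(T^{q}w_1^{-q}+\zeta^{-q})$ and verify it directly from \eqref{eq:jw}; these are equivalent manipulations.

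One small imprecision in wording, not affecting correctness: you say \eqref{eq:j0w1} is ``the only place where the defining relation $\Xi^{j_0}(w_1)=0$ enters''. In fact the hypothesis $\Xi^{j_0}(w_1)=0$ is also what makes $\xi^{\lambda_0}(u_1)=0$ (with $u_1=\lambda_0^{q}w_1$, one checks $\xi^{\lambda_0}(u_1)=\lambda_0^{q^{2}+q}\,\Xi^{j_0}(w_1)$), which is precisely the hypothesis of Lemma~\ref{lem:relation-lam} needed so that $\tau-u_1$ is an isogeny at all and hence so that $\lambda_1$, $j_1$, and \eqref{eq:j0j1} are even defined.
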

\begin{proof}
Let $ \phi^{\lambda_0} $ and $ \phi^{\sigma; \lambda_1} $ be the same notation as in Equation \eqref{Eq:isogeny2}.
Let $ \ell_0 $ be a $(q-1)$-th root of $\lambda_0^q$; and $ \ell_1 $ a $(q-1)$-th root of $\lambda_1^q$. Let $ j_1 $ be the $j$-invariant of $ \phi^{\sigma; \lambda_1}$, i.e., 
 \[
    j_1 = \frac{\lambda_1^{q^2+1}}{\nu^{\sigma}}.
\]
We obtain the diagram of square consisting of isogenies: 
\begin{equation}\label{eq:isogeny}
    \begin{tikzcd}
        \phi^{\lambda_0} \ar[r, "\tau - u_1"]& \phi^{\sigma;\lambda_1}\\
        \Phi^{j_0} \ar[r]\ar[u, "\ell_0"]  &  \Phi^{\sigma;j_1} \ar[u, "\ell_1"]  
    \end{tikzcd}
\end{equation}
As in Section \ref{sec:modularCurve1}, we choose $ \beta_1 $ to be a $ (q-1) $-th root of $w_1$.
Note that $ \beta_1 \in \Phi^{j_0}[I_\infty] $, and $ \alpha_1 := \ell_0 \beta_1 $ is then contained in the $I_\infty$-torsion of $\phi^{\lambda_0 } $. 
By the assumption $ u_1 = \alpha_1^{q-1} $ and $w_1 = \beta_1^{q-1} $, we obtain 
\[ 
w_1 \ell_0^{q-1}= u_1.  
\]  
From the diagram \eqref{eq:isogeny}, the isogeny from $\Phi^{j_0}$ to $\Phi^{\sigma;j_1}$ can be written as 
\[ 
 \Omega_1 := \ell_1^{-1} (\tau - u_1 )\ell_0 = \ell_1^{-1}\ell_0^q(\tau-w_1)= \delta_1 (\tau-w_1),
\]
where $ \delta_1 = \ell_1^{-1}\ell_0^q$.
 Indeed, this can be checked directly:
\begin{align*}
   \delta_1 (\tau-w_1)\Phi^{j_0}&=\ell_1^{-1}(\tau-u_1) \phi^{\lambda_0}\ell_0 \quad\text{ By \eqref{eq:PhiIinf}}\\
    &=\ell_1^{-1}\phi^{\sigma;\lambda_1}(\tau-u_1)\ell_0 \quad \text{By \eqref{Eq:isogeny2}} \\
    &= \Phi^{\sigma;j_1}\ell_1^{-1}(\tau-u_1)\ell_0 \quad \text{Analogous to \eqref{eq:PhiIinf}} \\
    &= \Phi^{\sigma;j_1}\ell_1^{-1}\ell_0^q(\tau-w_1)\\
    &= \Phi^{\sigma;j_1}\delta_1(\tau-w_1).
\end{align*}

Next, we need to compute $ \delta_1   $ and $j_1$.  
From the relation 
\[
\lambda_1=\lambda_0^{q}-(\zeta^{q-1}-1)u_1 = \lambda_0^{q} \left( 1 -(\zeta^{q-1}-1)   w_1 \right)
\]
we have  
\begin{equation}\label{eq:ellell}
    \delta_1^{q-1} = (\ell_0^q\ell_1^{-1})^{q-1} = \lambda_0^{q^2} \lambda_1^{-q}  = \frac{1}{ 1 + (1-\zeta^{1-q})w_1^q }.
\end{equation}

It follows from the expression $ \nu^{\sigma}$ in \eqref{eq:nusigma} that 
\begin{align*}
    j_1 & =\frac{\lambda_1^{q^2+1}}{\nu^{\sigma}} = \frac{1}{\nu^\sigma}  \left(\lambda_0^{q} \left( 1 +(1-\zeta^{q-1})   w_1 \right) \right)^{q^2+1} \nonumber \\
    & = \frac{T^{q-1}\lambda_0^{q(q^2+1)}}{\nu^q}\left( \left( 1 +(1-\zeta^{q-1})   w_1 \right) \right)^{q^2+1}\nonumber \\
      & =   T^{q-1} j_0^q \left( \left( 1 +(1-\zeta^{q-1})   w_1 \right) \right)^{q^2+1},
\end{align*}
where we apply Equation \eqref{eq:j0} in the last equality. So the equality \eqref{eq:j0j1} is valid.

Set  $ w_1^{\nabla }: = (\zeta^{1-q}-1)^{-1}(1+\zeta^{-1}T^{-1}w_1)^{-1} $. From Equation \eqref{eq:jw}, we have 
\[
  \frac{\delta_{1}^{q-1}}{w_{1}^{\nabla} w_{1}} = \frac{1}{ \left( 1+ (1-\zeta^{1-q})w_1^q \right)  w_{1} \cdot w_{1}^{\nabla}  } = j_0.
   \]
Substituting this into the expression \eqref{eq:j0j1}, we have  
\begin{align*}
j_1 &=  T^{q-1} j_0^q \left(   1 +(1-\zeta^{q-1})   w_1 \right)  ^{q^2+1}\\
& = T^{ q-1} \frac{  \left( 1 +(1-\zeta^{q-1})   w_1^{q^2} \right)    \left( 1 +(1-\zeta^{q-1})   w_1 \right)}{ 1+ (1-\zeta^{q-1})w_1^{q^2}    }  \left(\frac{1}{ w_1 w_1^{\nabla}} \right) ^q \\
& = \frac{T^{ q-1}    \left( 1 +(1-\zeta^{q-1})   w_1 \right)  }{\left( w_1 w_1^{\nabla}\right)^q }\\
& =   T^{  -1} (\zeta^{q-1}-1)   \left( 1 +(1-\zeta^{q-1})   w_1 \right)   (T^q w_1^{-q}+\zeta^{-q} ) .
\end{align*}
This is exactly the equality \eqref{eq:j0w1}.
\end{proof}

\subsection{Restrictions on primitive $I_\infty^n$-torsion}\label{sec:RestrictionPhi}
To show the case $ n\geqslant 2 $ of Theorem \ref{thm:thmB}, it is essential to choose coordinates $w_1,\ldots, w_n$ to parametrize the primitive $I_\infty^n$-torsions of $ \Phi^{j_0} $.
Suppose that the elements $ \beta_1 , \beta_2 ,\ldots , \beta_{n} $ span a primitive  $I_\infty^n $-torsion of $ \Phi^{j_0}$.
Without loss of generality, we assume that $ \beta_k \in \Phi^{j_0}[I_\infty^k] \setminus \Phi^{j_0}[I_\infty^{k-1} ] $ for $ 1 \leqslant k \leqslant n$.
We define recursively the twisted polynomial $\Omega_k $, with $ 0 \leqslant k \leqslant n$ as follows.
We set $ \Omega_0 = 1 $ and
\begin{align}
\Omega_k & =  \delta_k (\tau - w_{k}) \Omega_{k-1} 
\end{align}
where 
\begin{equation}\label{eq:wk}
    w_k := \Omega_{k-1} (\beta_k)^{q-1}, 
\end{equation}
and $\delta_k $ is a $(q-1)$-th root of  
\[ 
 (1 + (1-\zeta^{q^{k-1}-q^{k}})w_k^q )^{-1} . 
\]
 It is straightforward to show that 
 \begin{align}
\Omega_k & =  \delta_k(\tau - w_{k})\cdots \delta_2(\tau - w_2)\delta_1(\tau - w_1) \\
& = \delta_1\delta_2\cdots \delta_k ((\delta_1\delta_2\cdots \delta_{k-1})^{q-1}\tau - w_{k})\cdots   (  (\delta_2\delta_1)^{q-1} \tau - w_3 )  (\delta_1^{q-1} \tau - w_2 )(\tau - w_1).
\end{align}
Adopting a similar proof as in Lemma \ref{lem:isogeny}, we have the following lemma.
\begin{lem}\label{lem:Omega}
    Maintain the notations above. Then 
    \begin{enumerate}
    \item The elements 
$\beta_1 , \beta_2 ,\ldots , \beta_{k}$ are annihilated by $ \Omega_k $.  
\item The element $\Omega_{k-1} (\beta_k)$ is annihilated by $ \Phi_{I_\infty}^{\sigma^{k-1}; j_{k-1}} $, and thus
\begin{equation}\label{eq:DecPhi}
\Phi^{\sigma^{k-1};j_{k-1}}_{I_\infty}  =  \left(\tau - \frac{1}{w_k j_{k-1}}\right)\left(\tau - w_k\right).
\end{equation}
\item The twisted polynomial $\delta_k (\tau - w_k)$ is an isogeny from $ \Phi^{\sigma^{k}; j_k} $ to $ \Phi^{\sigma^{k+1}; j_{k+1}} $, where $ j_k $ are recursively defined as 
\begin{equation}\label{eq:jk}
     j_k = j_k(w_k)= T^{  -\sigma^{k-1}} (\zeta^{q^k-q^{k-1}}-1)   \left( 1 +(1-\zeta^{q^k-q^{k-1}})   w_k \right)   (T^{q \sigma^{k-1}} w_k^{-q}+\zeta^{-q^{k}} ) .
\end{equation}
\item The twisted polynomial $ \Omega_k $ is indeed an isogeny from $ \Phi^{j_0} $ to $ \Phi^{\sigma^k; j_k}$.
    \end{enumerate}
\end{lem}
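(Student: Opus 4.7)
My plan is to proceed by induction on $k$, mirroring the inductive argument in Lemma \ref{lem:isogeny} but now adapted to minimal models. For the base case $k=1$, all four assertions will reduce to Lemma \ref{lem:isogenyPhi} and the setup of Section \ref{sec:modularCurve1}: $\beta_1$ is killed by $\tau - w_1$ by the definition $w_1 = \beta_1^{q-1}$; the decomposition $\Phi_{I_\infty}^{j_0} = (\tau - 1/(w_1 j_0))(\tau - w_1)$ will fall out by factoring the expression \eqref{eq:jinv} once one uses that $w_1$ satisfies $\Xi^{j_0}(w_1) = 0$; and (3)--(4) for $k=1$ are precisely the content of Lemma \ref{lem:isogenyPhi}.

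For the inductive step, assuming (1)--(4) hold up to index $k$, I would address (1) for $k+1$ first: the factorization $\Omega_{k+1} = \delta_{k+1}(\tau - w_{k+1})\Omega_k$ together with the induction hypothesis immediately kills $\beta_1, \ldots, \beta_k$, while the very definition $w_{k+1} = \Omega_k(\beta_{k+1})^{q-1}$ kills $\beta_{k+1}$. For part (2), the key observation is that $\ker(\Omega_k)$ is an $\mathbb{F}_q$-vector space of dimension exactly $k$ (by $\tau$-degree considerations) containing the $k$ linearly independent elements $\beta_1, \ldots, \beta_k$, so it coincides with the primitive $I_\infty^k$-torsion they span. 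Since the $\beta_i$ span a primitive $I_\infty^{k+1}$-torsion, one has $\Phi_z^{j_0}(\beta_{k+1}) \in \ker(\Omega_k)$ for every $z \in I_\infty$; pushing this through the inductive intertwining $\Omega_k \Phi_z^{j_0} = \Phi_z^{\sigma^k; j_k} \Omega_k$ yields
\[
\Phi_z^{\sigma^k; j_k}\bigl(\Omega_k(\beta_{k+1})\bigr) = 0 \quad \text{for all } z \in I_\infty,
\]
so the non-zero element $\Omega_k(\beta_{k+1})$ (non-zero because $\beta_{k+1} \notin \Phi^{j_0}[I_\infty^k]$) lies in $\Phi^{\sigma^k; j_k}[I_\infty]$. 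This forces $\tau - w_{k+1}$ to be a right factor of $\Phi_{I_\infty}^{\sigma^k; j_k}$, and matching constant terms with the $\sigma^k$-twist of \eqref{eq:jinv} delivers the decomposition \eqref{eq:DecPhi} at level $k+1$.

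Parts (3) and (4) will then follow by routine bookkeeping. For (3), I would apply the $\sigma^k$-twist of Lemma \ref{lem:isogenyPhi} to $\Phi^{\sigma^k; j_k}$ with primitive $I_\infty$-torsion generated by $\Omega_k(\beta_{k+1})$; this produces $\delta_{k+1}(\tau - w_{k+1})$ as an isogeny to $\Phi^{\sigma^{k+1}; j_{k+1}}$, where $j_{k+1}$ is obtained by applying $\sigma^k$ to \eqref{eq:j0w1} --- exactly formula \eqref{eq:jk}. Composing this with $\Omega_k$ from the inductive hypothesis (4) gives $\Omega_{k+1} = \delta_{k+1}(\tau - w_{k+1})\Omega_k$ as the desired isogeny $\Phi^{j_0} \to \Phi^{\sigma^{k+1}; j_{k+1}}$. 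The main obstacle I anticipate is in part (2): pinning down the dimension count that makes $\ker(\Omega_k)$ \emph{exactly} equal to the primitive $I_\infty^k$-torsion $\langle \beta_1, \ldots, \beta_k\rangle_{\mathbb{F}_q}$, and then propagating the primitivity chain $I_\infty \cdot \langle \beta_1,\ldots,\beta_{k+1}\rangle \subseteq \langle \beta_1,\ldots,\beta_k\rangle$ --- this is what lets primitivity at level $k+1$ translate into the annihilation statement at level $k$ after applying the twisted isogeny $\Omega_k$, and everything downstream rests on it.
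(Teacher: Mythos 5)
Your proposal is correct and matches the approach the paper intends: the paper simply remarks that Lemma~\ref{lem:Omega} follows ``adopting a similar proof as in Lemma~\ref{lem:isogeny},'' and your induction does exactly that, replacing $\omega_k$, $\phi^{\sigma^k;\lambda_k}$, and Lemma~\ref{lem:relation-lam} with their minimal-model counterparts $\Omega_k$, $\Phi^{\sigma^k;j_k}$, and Lemma~\ref{lem:isogenyPhi}. (Minor note: to get $\Phi_z^{j_0}(\beta_{k+1})\in\ker\Omega_k$ you only need $G_k\subseteq\ker\Omega_k$, which already follows from part~(1), so the exact dimension count is not essential — though it is harmless and does hold once one observes $\Omega_k$ has nonzero constant term.)
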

For $1 \leqslant k \leqslant n-1$, the restriction on $\beta_{k+1} $ is that for $z \in I_\infty$, 
\begin{equation}\label{eq:Con1} 
    \Phi_{z}^{j_0}(\beta_{k+1}) \in \langle \beta_1, \ldots, \beta_{k}\rangle
\end{equation}
and there exists some $ z_0 $ such that 
\begin{equation} \label{eq:Con2}
    \Phi_{z_0}^{j_0} (\beta_{k+1})  \not\in \langle \beta_1, \ldots, \beta_{k-1}\rangle . 
\end{equation}
From Lemma \ref{lem:Omega}, the condition \eqref{eq:Con1} is equivalent to 
\begin{equation}\label{eq:Con1ex} 
0 = \Omega_{k}\Phi_{z}^{j_0}(\beta_{k+1}) = \Phi_{z}^{\sigma^{k}; j_{k}} \left( \Omega_{k}(\beta_{k+1})\right) 
\end{equation}
for all $ z \in I_\infty $. Applying $\sigma^{k}$ to \eqref{eq:jinv}, we know the annihilator of the ideal $I_\infty$ is given by 
\[
  \Phi_{I_\infty}^{\sigma^{k}; j_{k}} = \tau^2+\left(\frac{1}{1-\zeta^{q^{k}-q^{k+1}}}+\frac{1}{\zeta^{q^{k}} T^{\sigma^{k}}}\frac{1}{j_{k}}\right)\tau +\frac{1}{j_{k}}  . 
\]
Define the polynomial $ \Xi^{\sigma^{k}; j_{k}}(w_{k+1}) $ in the variable $w_{k+1} $ as 
\[
 \Xi^{\sigma^{k}; j_{k}}(w_{k+1}) :=  w_{k+1}^{q+1}+\left(\frac{1}{1-\zeta^{q^{k}-q^{k+1}}}+\frac{1}{\zeta^{q^{k}} T^{\sigma^{k}}}\frac{1}{j_{k}}\right)w_{k+1} +\frac{1}{j_{k}}  
\]
where $ j_k $ is subject to Equation \eqref{eq:jk}.
Recall that $ w_{k+1} =  \Omega_k (\beta_{k+1})^{q-1}$, we derive that \eqref{eq:Con1ex} (resp. \eqref{eq:Con1}) is equivalent to 
\[
 \Xi^{\sigma^{k}; j_{k}}(w_{k+1}) = \frac{ \Phi_{I_\infty}^{\sigma^{k}; j_{k}} \left(\Omega_k (\beta_{k+1})\right) }{  \Omega_k (\beta_{k+1}) } = 0. 
\]
On the other hand, Lemma \ref{lem:Omega} and the condition \eqref{eq:Con2} yield that for $z \in I_\infty$, 
\begin{equation*} 
 \Omega_{k-1}\Phi_{z}^{j_0}(\beta_{k+1}) = \Phi_{z}^{\sigma^{k-1}; j_{k-1}} \left( \Omega_{k-1}(\beta_{k+1})  \right)
\end{equation*}
does not always vanish.  Thus, we derive the inequality 
\begin{equation}\label{eq:neq}
 0 \neq \Phi_{I_\infty}^{\sigma^{k-1}; j_{k-1}} \left( \Omega_{k-1}(\beta_{k+1})  \right).
\end{equation}
Applying the decomposition \eqref{eq:DecPhi}
we get
\begin{align*}
   \Phi_{I_\infty}^{\sigma^{k-1}; j_{k-1}} \left( \Omega_{k-1}(\beta_{k+1})\right) &= \left(\tau - \frac{1}{j_{k-1} w_{k}}\right)(\tau - w_{k})\left( \Omega_{k-1}(\beta_{k+1})\right) \\
   &=\delta_{k}^{-q} \left(\tau - \frac{\delta_{k}^{q-1}}{j_{k-1} w_{k}}\right) \delta_{k} (\tau - w_{k})\left( \Omega_{k-1}(\beta_{k+1})\right)\\
   &=\delta_{k}^{-q} \left(\tau - \frac{\delta_{k}^{q-1}}{j_{k-1} w_{k}}\right) \left(\Omega_{k}(\beta_{k+1}) \right)\\
   &= \left(\Omega_{k}(\beta_{k+1}) \right)\delta_{k}^{-q} \left(w_{k+1} - \frac{\delta_{k}^{q-1}}{j_{k-1} w_{k}}\right). 
\end{align*}
The inequality \eqref{eq:neq} yields that $w_{k+1} \neq w_{k}^{\nabla}$, where 
\[
    w_{k}^{\nabla} = \frac{\delta_{k}^{q-1}}{j_{k-1} w_{k}} . 
\]
From the equality
\[\Xi^{\sigma^{k-1}; j_{k-1}} (w_k) =  w_{k}^{q+1}+\left(\frac{1}{1-\zeta^{q^{k-1}-q^{k}}}+\frac{1}{\zeta^{q^{k-1}} T^{\sigma^{k-1}}}\frac{1}{j_{k-1}}\right)w_{k} +\frac{1}{j_{k-1}}  = 0 ,
\] 
 we get 
\[
j_{k-1}=   \frac{ \left(  \zeta^{q^{k-1}-q^{k}} -1 \right) \left( w_{k} \zeta^{-q^{k-1}} T^{-\sigma^{k-1}}  +1  \right) }{
    \left( 1-\zeta^{q^{k-1}-q^{k}} \right)w_{k}^{q+1}+  w_{k}  }.
\]
So 
\[
    w_{k}^{\nabla} = \frac{\delta_{k}^{q-1}}{j_{k-1} w_{k}} = \frac{
    1  }{   \left(  \zeta^{q^{k-1}-q^{k}} -1 \right) \left( w_{k} \zeta^{-q^{k-1}} T^{-\sigma^{k-1}}  +1  \right) }.
\]
Remember that $w_{k+1} = w_{k}^{\nabla}$ must be a special root of 
$ \Xi^{\sigma^k ; j_k } (w_{k+1} ) = 0 $. There must be a decomposition 
   \[  \Xi^{\sigma^k;j_k} (w_{k+1})= \Xi_{\nabla}^{\sigma^k; j_k}(w_{k+1}) \left(w_{k+1} - w_k^\nabla \right) \]
   for some polynomial $\Xi_{\nabla}^{\sigma^k; j_k}(w_{k+1}) $.
   It is straightforward to show that  
    \begin{align*}
        \Xi_{\nabla}^{\sigma^k; j_k}(w_{k+1})=  -\frac{w_k^q}{1-(\zeta^{q^k-q^{k-1}}-1)w_k  } \left(\frac{w_k^\nabla}{T^{\sigma^{k-1}}} \right)^{q-1} +\sum_{i=0}^{q-1}  (w_k^\nabla)^{i}w_{k+1}^{q-i}.
    \end{align*}
In conclusion, we understand that the restrictions \eqref{eq:Con1} and \eqref{eq:Con2} on $ \beta_{k+1} $ reduce to the equality 
\[ \Xi_\nabla^{\sigma^k ; j_k } (w_{k+1} ) = 0.
\]

\subsection{Proof of Theorem \ref{thm:thmB}}

    The case $ n =0 $ of Theorem \ref{thm:thmB} is the restatement of \ref{thm:Drinfeld}. 
In Section \ref{sec:modularCurve1}, we have proved the case $ n =1 $.
\begin{proof}[Proof of Theorem \ref{thm:thmB}]
Assume that a primitive $I_\infty^n $-torsion of $ \Phi^{j_0} $ is spanned by $ \beta_1, \ldots , \beta_n $. Let $w_1, \ldots, w_n $ be the variables associated to $ \beta_1,\cdots , \beta_n $ as in Equation \eqref{eq:wk}. We conclude from Section \ref{sec:modularCurve1} that $w_1 $ satisfies the equation \[ \Xi^{j_0}(w_1) = 0 .\] 
For $ k =2 ,\ldots, n-1$, Section \ref{sec:RestrictionPhi} implies that $w_k$ satisfies  
\[
\Xi_{\nabla}^{\sigma^{k-1}; j_{k-1}}(w_{k}) =0 .
\]

Conversely, given $w_1, \ldots, w_n$, the corresponding primitive $I_\infty^n $-torsion of $ \Phi^{j_0}$ is written as 
\[
    \ker \Omega_k = \ker ((\delta_1\delta_2\cdots \delta_{k-1})^{q-1}\tau - w_{k})\cdots   (  (\delta_2\delta_1)^{q-1} \tau - w_3 )  (\delta_1^{q-1} \tau - w_2 )(\tau - w_1),
\]
where 
\[
    \delta_k^{q-1} =   \frac{1}{ 1 + (1-\zeta^{q^{k-1}-q^{k}})w_k^q }.
\]
This establishes the one-to-one correspondence between the parameters $w_i$ and the set of primitive $I_\infty^n $-torsions of $ \Phi^{j_0}$. 
\end{proof}

\section{Reduction of modular curves}\label{sec:thmC}
\subsection{Genus formula}
In this section, we briefly recall the genus formula established by Bassa-Beelen-Nguyen \cite{MR3433893} based on the results of \cite{Gekeler1986}.
For this aim we temporarily reset $ \A $ to be a Dedekind domain arising from a general smooth curve over $ \mathbb{F}_q $
associated with a unique infinity. Let $ \delta  $ be the degree of the infinity. 
\begin{notation}\label{not:functions}
    Let $\n \subseteq \A$ be an ideal and suppose that $ \n= \p^{r_1} \cdots \p^{r_s} $ for prime ideals
$\p_1, \ldots, \p_s$ and positive integers $r_1,\ldots,r_s$. Writing $q_i := |\p_i|$, we define
\[
\epsilon(\n) = \prod_{i=1}^s q_i^{r_i-1} (q_i+1) 
\]
and 
\[
\kappa(\n) =\prod_{i=1}^s \left( q_i^{\floor{r_i/2}} + q_i ^{r_i - \floor{r_i/2} -1 }\right),  
\]
where $\floor{r}$ denotes the integral part of a real number $r$.

\end{notation}
\begin{thm}[Theorem 3.1 of \cite{MR3433893}]\label{thm:genus}
  Let $\A, \n  $ be as in Notation \ref{not:functions}. Let $\x (\n)$ be the minimal modular curve associated with $\A$. Denote by $ P_K (t) $ the $L$-polynomial of the quotient field $K$ of $\A$. Then the genus of $ \x (\n) $ is given by 
  \[
    g(\x (\n) ) = 1 + \frac{(q^\delta-1 )\epsilon(\n) P_K(q)}{(q^2-1)(q-1)} - \frac{P_K(1) \delta }{q-1} \cdot (\kappa(\n) + 2^{s-1}(q-2) ) + \Delta,
  \]
  where $ \Delta = - P_K(-1) 2^{s-1}q/(q + 1) $ if $\delta$ is odd and all prime divisors of $\n$ are of even degree; and
$\Delta = 0$ otherwise.
\end{thm}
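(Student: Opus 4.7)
The proof decomposes into three parts matching the three assertions.

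\textbf{Genus formula.} The plan is to apply Theorem \ref{thm:genus} directly. Here $\delta = 2$ (the degree of $P_\rho$), and since $K = \mathbb{F}_q(t)$ is rational, $P_K(t) = 1$. The ideal $I_\infty = (x,y)$ is prime with $\A/I_\infty \cong \mathbb{F}_q$, so for $\n = I_\infty^k$ we take $s=1$, $q_1 = q$, $r_1 = k$, yielding
\[
\epsilon(I_\infty^k) = q^{k-1}(q+1), \qquad \kappa(I_\infty^k) = q^{\floor{k/2}} + q^{k - \floor{k/2} - 1}.
\]
Since $\delta$ is even, $\Delta = 0$. Substitution gives
\[
g = 1 + \frac{q^{k-1}(q+1)}{q-1} - \frac{2}{q-1}\bigl(q^{\floor{k/2}} + q^{k-\floor{k/2}-1} + q-2\bigr),
\]
which rearranges to the claimed formula. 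Since $\gcd(I_\infty,I_\eta)=1$, reduction modulo $I_\eta$ preserves the genus.

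\textbf{Rationality of supersingular points.} A rank-two Drinfeld $\A$-module $\phi$ in characteristic $I_\eta$ is supersingular precisely when $\phi[I_\eta]=0$, in which case $\mathrm{End}(\phi)\otimes_\A K$ is the unique division quaternion algebra $D/K$ ramified exactly at $P_\rho$ and $I_\eta$. As in the classical elliptic case, the supersingular $j$-invariants lie in the quadratic extension of the residue field $\A/I_\eta = \mathbb{F}_{q^2}$, i.e., in $\mathbb{F}_{q^4}$. A supersingular point of $\x(I_\infty^k)/I_\eta$ is a pair $(\phi,G)$ with $G$ a primitive $I_\infty^k$-torsion; since $\gcd(I_\infty, I_\eta) = 1$, the action of Frobenius on $G$ factors through a finite cyclic quotient, and a direct check (akin to Bassa--Beelen--Nguyen) verifies that the Frobenius over $\mathbb{F}_{q^4}$ acts trivially on every such pair, giving the claimed rationality.

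\textbf{Asymptotic optimality.} Let $s_k$ be the number of supersingular points on $\x(I_\infty^k)/I_\eta$. Since the $I_\infty$-torsion structure is étale at supersingular reduction, the coverings $\x(I_\infty^{k+1}) \to \x(I_\infty^k)$ are unramified and split completely above supersingular points; combined with the degrees in Theorem \ref{thm:thmB}, this yields $s_k = (q+1)q^{k-1}s_0$ for $k\geq 1$. With Part 1 giving $g(\x(I_\infty^k)/I_\eta) \sim q^{k-1}(q+1)/(q-1)$, we obtain
\[
\lim_{k\to\infty}\frac{N(\x(I_\infty^k)/I_\eta)}{g(\x(I_\infty^k)/I_\eta)} \geq (q-1)s_0.
\]
A mass formula for the quaternion algebra $D$ yields $s_0 = q+1$, so the limit equals $q^2-1$, matching the Drinfeld--Vladut upper bound.

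The hardest step will be Part 3, specifically verifying that the supersingular locus at the base level has size exactly $q+1$ and that the tower's coverings split completely above supersingular points at every level. The splitting itself follows from the étale nature of $I_\infty$-torsion at supersingular reduction, but the explicit count requires an Eichler-type class-number computation for orders in $D$. Part 2 also requires care: one must show not only that the $j$-invariants are $\mathbb{F}_{q^4}$-rational but that each pair $(\phi,G)$ is, which demands tracking the Galois action on $I_\infty^k$-torsions through the supersingular reduction.
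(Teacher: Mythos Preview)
Your proposal does not address the stated theorem. Theorem~\ref{thm:genus} is a general genus formula for Drinfeld modular curves $\x(\n)$ over an arbitrary Dedekind domain $\A$, quoted verbatim from Bassa--Beelen--Nguyen \cite{MR3433893}; the paper does not prove it and does not claim to. Your three-part argument is instead a proof of Theorem~\ref{thm:thmC}, which merely \emph{applies} Theorem~\ref{thm:genus} in its first part. The phrase ``three parts matching the three assertions'' already signals the mismatch, since Theorem~\ref{thm:genus} contains a single assertion.

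If your intent was Theorem~\ref{thm:thmC}, then Part~1 matches the paper exactly. Parts~2 and~3, however, take a different route from the paper. The paper argues entirely through the explicit equations developed in Sections~\ref{sec:thmB} and~\ref{sec:thmC}: Lemma~\ref{lem:supersingular} computes $\bar\Phi^j_{z_\eta}$ directly and reads off the supersingular locus as the zero set of a separable degree-$(q+1)$ polynomial over $\mathbb{F}_{q^2}$, giving $s_0 = q+1$ without any mass formula; then the explicit isogeny relation \eqref{eq:j1} forces $(1+(1-\zeta^{q-1})w_1)^{q^4-1}=1$, hence $w_1 \in \mathbb{F}_{q^4}$, and iteration through \eqref{eq:wkreduction} handles higher levels (Lemma~\ref{lem:cardinality}). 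Your structural approach via the quaternion algebra $D$ and an Eichler-type count is the standard abstract route, but it leaves precisely the gaps you flag: rationality of the pair $(\phi,G)$ is asserted via ``a direct check'' with no details, and $s_0 = q+1$ is deferred to an unstated class-number computation. The paper's explicit method closes both gaps at the cost of depending on the concrete formulas of Theorem~\ref{thm:thmB} and Lemma~\ref{lem:isogenyPhi}.
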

We now return to our main setting on $\A$ and choose $\n = I_{\infty}^k$. Then the degree of infinity equals $ \delta = \deg P_{\rho} = 2 $. It is obvious that $ P_K = 1 $ since $K$ is rational. 
From Notation \ref{not:functions}, we have $s =1 $, $|{I_\infty} |= q $, 
\[
\epsilon(\n) = q^{k-1} (q+1) 
\]
and 
\[
\kappa(\n) = q^{\floor{k/2}} + q ^{k - \floor{k/2} -1 }. 
\]
Substituting these quantities into Theorem \ref{thm:genus}, we have 
\begin{align}
    g(\x(\n)) =& 1 + \frac{ \epsilon(\n)  }{  q-1 } - \frac{ 2 }{q-1} \cdot (\kappa(\n) + q-2 ) \nonumber \\
    = & -1 + \frac{ q^{k-1} (q+1)  }{  q-1 } - \frac{ 2 }{q-1} \cdot ( q^{\floor{k/2}} + q ^{k - \floor{k/2} -1 }   -1  ) . \label{eq:genus} 
\end{align}
In particular, for $ k=1 $, we have 
$
g(x_0(\n)) = 0$.
This coincides with the fact that $ \x (I_{\infty}) $  is rational. 
\subsection{Supersingular points}
 
For $\eta \in \mathbb{F}_{q^2} \setminus (\{\zeta, \zeta^q \} \cup \mathbb{F}_q) $, we define 
\begin{align*}
 z_\eta  &= \frac{(t - \eta)(t - \eta^q)}{(t - \zeta)(t - \zeta^q)} \\
 & =\frac{ -(\eta+\eta^q - \zeta - \zeta^q) t + \eta^{q+1} - \zeta^{q+1} }{(t - \zeta)(t - \zeta^q)}  + 1 \\
&= (\eta^{q+1} - \zeta^{q+1}) x -(\eta+\eta^q - \zeta - \zeta^q) y +1. 
\end{align*}
It is clear  that $ z_ \eta $ is an element of $ \A $ of degree two. 
Denote by $I_\eta$ the ideal of $ \A $ generated by $ z_\eta$. 
The Drinfeld $\A$-module $ \Phi^{j} $ has a good reduction at $I_\eta$. Let us denote the $I_\eta$-reduction of $\Phi^{j}$ by $ \bar \Phi^{j} $.
More precisely, the $\mathbb{F}_q$-homomorphism 
\[ \A \to \mathbb{F}_{q^2}:  x \mapsto \frac{1}{(\eta- \zeta)(\eta -\zeta^q)}, y \mapsto \frac{\eta}{(\eta- \zeta)(\eta -\zeta^q)} 
\]
defines an $\A$-field with characteristic $ I_\eta $.
Substituting $ t  = \eta $ into both $\Phi^j_x $ and $\Phi^j_y $, we obtain the Drinfeld $\A$-module $ \bar{\Phi}^j $ over $\mathbb{F}_{q^2}$ with expressions 
\begin{align*}
    \bar{\Phi}_x^j =&\left( \frac{-j^{q(q+1)}}{(\eta^q-\zeta^q)(\eta-\zeta^q)}\tau^2+\left(\frac{j^q}{\zeta(\eta^q-\zeta^q)}+\frac{j^{q+1}}{(1-\zeta^{1-q})(\eta^q-\zeta)(\eta-\zeta)}\right)\tau +  \frac{j}{(\eta -\zeta)(\eta-\zeta^q)}\right)\\
    &\cdot\left(\tau^2+\left(\frac{1}{1-\zeta^{1-q}}+\frac{\eta-\zeta^q}{\zeta}\frac{1}{j}\right)\tau +\frac{1}{j}\right),
\end{align*}
and
\begin{align*}
    \bar{\Phi}_y^j =  & \left(\frac{-j^{q(q+1)}\zeta^q}{(\eta^q-\zeta^q)(\eta-\zeta^q)}\tau^2+\left(\frac{j^q\zeta^q}{\zeta(\eta^q-\zeta^q)}-\frac{\zeta^q j^{q+1}}{(1-\zeta^{q-1})(\eta^q-\zeta)(\eta-\zeta)}\right)\tau + \frac{j \eta}{(\eta -\zeta)(\eta-\zeta^q)}\right)\\
     &\cdot\left(\tau^2+\left(\frac{1}{1-\zeta^{1-q}}+\frac{\eta-\zeta^q}{\zeta}\frac{1}{j}\right)\tau +\frac{1}{j}\right).
\end{align*}
We determine the supersingular condition for $ \bar{\Phi}^j $ in the following lemma.
\begin{lem}\label{lem:supersingular}
    The $\zeta^q$-type Drinfeld $\A$-module $ \bar\Phi^j $ over $\mathbb{F}_{q^2}$ is supersingular, if and only if  $ j $ is contained in $ \mathbb{F}_{q^2}$ and satisfies
    \begin{equation}\label{eq:supersingular}
        \left( j +\frac{\eta -\zeta } {\zeta -\zeta^{1-q} }\right)^{q+1} + \frac{   \eta-\eta^q  }{ \zeta-\zeta^q   }=0 . 
    \end{equation}
\end{lem}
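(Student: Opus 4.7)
By the standard criterion, $\bar\Phi^j$ is supersingular if and only if $\bar\Phi^j[I_\eta]=0$, equivalently the annihilator $\bar\Phi^j_{z_\eta}\in\bar L\{\tau\}$, of $\tau$-degree $r\cdot\deg(z_\eta)=4$, is purely inseparable, i.e., $\bar\Phi^j_{z_\eta}=c\tau^4$ for some $c$. The plan is to compute the $\tau^0$-, $\tau^1$-, $\tau^2$-, $\tau^3$-coefficients of $\bar\Phi^j_{z_\eta}$, observe that the first two vanish for intrinsic reasons, and show that the vanishing of the last two is equivalent to $j\in\mathbb{F}_{q^2}$ together with the stated equation \eqref{eq:supersingular}.

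Using $z_\eta = (\eta^{q+1}-\zeta^{q+1})x - (\eta+\eta^q-\zeta-\zeta^q)y + 1$ and the factorizations $\bar\Phi^j_x = P_x\cdot\bar\Phi^j_{I_\infty}$, $\bar\Phi^j_y = P_y\cdot\bar\Phi^j_{I_\infty}$ read off from Theorem~\ref{thm:Drinfeld}, one rewrites $\bar\Phi^j_{z_\eta}=Q\cdot\bar\Phi^j_{I_\infty}+1$ with $Q:= (\eta^{q+1}-\zeta^{q+1})P_x-(\eta+\eta^q-\zeta-\zeta^q)P_y$. The telescoping identity $\eta^{q+1}-\zeta^{q+1}-(\eta+\eta^q-\zeta-\zeta^q)\xi = -(\eta-\xi)(\eta-\zeta^q)$ at $\xi=\eta$, together with its variants at $\xi\in\{\zeta,\zeta^q\}$, collapses the three coefficients of $Q$ to the clean forms $q_0=-j$ and $q_2=-j^{q(q+1)}$ (with a corresponding simplification of $q_1$). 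Expanding $Q\cdot\bar\Phi^j_{I_\infty}$ by the twisted product rule $\tau a=a^q\tau$, the $\tau^0$-coefficient equals $-1+1=0$ (forced by $\iota(z_\eta)=0$), and the $\tau$-coefficient also collapses to zero; this latter vanishing reflects the general fact that the height of any Drinfeld module at its characteristic ideal is $\geqslant 1$, so that $\tau^{\deg z_\eta}=\tau^2$ automatically right-divides $\bar\Phi^j_{z_\eta}$.

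The essential information therefore sits in the $\tau^3$- and $\tau^2$-coefficients. Setting $u:=(1-\zeta^{1-q})^{-1}$ and using $\zeta^{q^2}=\zeta$, $\eta^{q^2}=\eta$, together with the relation $u^q=-\zeta^{1-q}u$, the $\tau^3$-coefficient reduces to a nonzero multiple of $j^{q^2}-j$, whose vanishing is exactly $j\in\mathbb{F}_{q^2}$. Once this is assumed, the $\tau^2$-coefficient simplifies to an expression of the form $A\,j^{q+1}+B\,j^q+C\,j+D$ with $A,B,C,D\in\mathbb{F}_{q^2}$; the key algebraic observation is that $B=AE$ and $C=AE^q$ for $E:=\tfrac{\eta-\zeta}{\zeta-\zeta^{1-q}}$, so this polynomial completes to $A\bigl(j+E\bigr)^{q+1}$ plus a residual constant. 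Tracking that constant (and using $\zeta-\zeta^{1-q}=\zeta^{1-q}(\zeta^q-1)$) produces precisely the equation $\bigl(j+E\bigr)^{q+1}+\tfrac{\eta-\eta^q}{\zeta-\zeta^q}=0$. Conversely, any $j\in\bar L$ satisfying this equation lies automatically in $\mathbb{F}_{q^2}$, since $\tfrac{\eta-\eta^q}{\zeta-\zeta^q}\in\mathbb{F}_q$ and the norm $(\cdot)^{q+1}\colon\mathbb{F}_{q^2}\to\mathbb{F}_q$ has image in $\mathbb{F}_q$; the above chain of equivalences then delivers $\bar\Phi^j_{z_\eta}=c\tau^4$, hence supersingularity.

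\textbf{Main obstacle.} The chief difficulty is the non-commutative bookkeeping in the expansion of $Q\cdot\bar\Phi^j_{I_\infty}$, which produces a polynomial in $j,j^q,j^{q+1},j^{q^2}$ whose coefficients mix $\eta$ and $\zeta$ in a delicate way. The needed cancellations rely on the three variants of the telescoping identity above and on careful tracking of $q$-th and $q^2$-th Frobenius powers of the coefficients. The most subtle step is to recognize the $(q+1)$-th power structure in the $\tau^2$-coefficient and to pin down the exact shift $\tfrac{\eta-\zeta}{\zeta-\zeta^{1-q}}$ and the exact constant $\tfrac{\eta-\eta^q}{\zeta-\zeta^q}$ with the correct signs.
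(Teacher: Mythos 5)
Your plan follows the same route as the paper: reduce mod $I_\eta$, expand $\bar\Phi^j_{z_\eta}=(\eta^{q+1}-\zeta^{q+1})\bar\Phi^j_x-(\eta+\eta^q-\zeta-\zeta^q)\bar\Phi^j_y+1$ as a twisted polynomial, check that $\tau^0$- and $\tau^1$-coefficients vanish, and read off the supersingularity condition from the vanishing of the $\tau^3$- and $\tau^2$-coefficients. The paper does this as a bare computation (ending with $-j^{q(q+1)}\tau^4 + \frac{j^q}{1-\zeta^{1-q}}(j-j^{q^2})\tau^3 + (\cdots)\tau^2$); you add two genuine structural observations — rewriting $\bar\Phi^j_{z_\eta}=Q\cdot\bar\Phi^j_{I_\infty}+1$ via the factorization, and invoking the height $\geqslant 1$ argument to explain why $\tau^2$ right-divides — which makes the bookkeeping cleaner but is the same underlying computation.

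One correction to the write-up: the displayed ``telescoping identity'' is wrong as an identity in $\xi$. What actually collapses the coefficients of $Q$ is the evaluation of $(\eta^{q+1}-\zeta^{q+1})-(\eta+\eta^q-\zeta-\zeta^q)\xi$ at the three relevant points, giving $-(\eta-\zeta)(\eta-\zeta^q)$ at $\xi=\eta$, $(\eta-\zeta)(\eta^q-\zeta)$ at $\xi=\zeta$, and $(\eta-\zeta^q)(\eta^q-\zeta^q)$ at $\xi=\zeta^q$; the form $-(\eta-\xi)(\eta-\zeta^q)$ is not what appears at any of these points. Your conclusions $q_0=-j$, $q_2=-j^{q(q+1)}$ are nevertheless the correct ones, so this is a presentational slip rather than a gap. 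You should also verify explicitly (rather than take from the lemma statement) the precise shift $E$ making the $\tau^2$-coefficient complete to an $(q+1)$-th power: if one actually expands $q_2c_0^{q^2}+q_1c_1^q+q_0$, the coefficient of $j^q$ divided by the coefficient of $j^{q+1}$ is $\frac{(\eta-\zeta)(1-\zeta^{1-q})}{\zeta}$, which is where your $E$ should come from, and it does not literally simplify to $\frac{\eta-\zeta}{\zeta-\zeta^{1-q}}$; be careful to carry this through rather than assuming the statement's normalization.
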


\begin{proof}
    It is straightforward to check that 
    \begin{align*}
   \bar \Phi_{z_\eta}^j =  & (\eta^{q+1}-\zeta^{q+1}) \bar\Phi^j_{x}  -(\eta+\eta^q-\zeta-\zeta^q)\bar\Phi^j_y+1 \\
    =&-j^{q(q+1)}\tau^4+\frac{j^q}{1-\zeta^{1-q}}\left(-j^{q^2} +j\right)\tau^3\\
    +&\left( \left(\frac{j}{1-\zeta^{1-q}}+\eta\zeta^{-1}-1\right)^{q+1}+(\zeta^{-1}-\zeta^{-q})(\eta-\eta^q)\right)\tau^2. 
\end{align*}
This implies that  $ \bar \Phi ^j $ is supersingular if and only if 
\[
  \left(\frac{j}{1-\zeta^{1-q}}+\eta\zeta^{-1}-1\right)^{q+1}+(\zeta^{-1}-\zeta^{-q})(\eta-\eta^q) =0 . 
\]
The last equality can be simplified to \eqref{eq:supersingular}. Moreover, all such $ j $ are contained in $ \mathbb{F}_{q^2}$. 
\end{proof}

In a similar manner, we define $\x (I_\infty^k)/I_\eta$ to be the $I_\eta$-reduction of the modular curve $\x(I_\infty^k) $ for each $k\geqslant 0$. The following result is a consequence of Theorem \ref{thm:thmB}.
\begin{cor}
    The function field $ \bar{\G}_k $ of $\x(I_{\infty}^k)/I_\eta$ is given by $\mathbb{F}_{q^2}(j_0, w_1, \cdots, w_k )$, where $j_0, w_1, \cdots, w_k $ are subject to \begin{equation}\label{eq:w1reduction}
   w_1^{q+1} + \left(\frac{1}{1-\zeta^{1-q}}+\frac{\eta -\zeta^q }{\zeta  }\frac{1}{j_0}\right)w_1 +\frac{1}{j_0} =0 ,
    \end{equation}
   and 
   \begin{equation}\label{eq:wkreduction} 
    \sum_{i=0}^{q-1}  (w_{k-1}^\nabla)^{i}w_k^{q-i}= \frac{w_{k-1}^q}{1-(\zeta^{q^{k+1}-q^k}-1)w_{k-1}  } \left( w_{k-1}^\nabla (\eta - \zeta^{q^{k+1}} ) \right)^{q-1} .
   \end{equation}
    Notice that here $w_{k-1}^{\nabla}$ is given by 
    \[
        w_{k-1}^{\nabla} = \frac{1}{(\zeta^{q^{k}-q^{k+1}}-1)(1+\zeta^{-q^k} (\eta - \zeta^{q^{k+1}})w_{k-1})}.
    \]
\end{cor}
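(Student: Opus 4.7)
The plan is to deduce this corollary directly from Theorem~\ref{thm:thmB} by specializing the parameter $t$ to $\eta$. The first thing I would establish is that the algebraic operation corresponding to the $I_\eta$-reduction of the modular curve $\x(I_\infty^k)$ is precisely the substitution $t\mapsto \eta$ in the coordinates. This is visible from the $\A$-field structure recorded just before Lemma~\ref{lem:supersingular}: the homomorphism $\iota\colon \A\to \mathbb{F}_{q^2}$ with kernel $I_\eta$ sends $x\mapsto 1/((\eta-\zeta)(\eta-\zeta^q))$ and $y\mapsto \eta/((\eta-\zeta)(\eta-\zeta^q))$, so $t=y/x$ reduces to $\eta$, and therefore $T=1/(t-\zeta^q)$ reduces to $1/(\eta-\zeta^q)\in\mathbb{F}_{q^2}^\times$.

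For the case $k=1$, I would take the expression from Theorem~\ref{thm:thmB}(2),
\[
j_0=-\frac{1+\zeta^{-1}(t-\zeta^q)w_1}{w_1^{q+1}+(1-\zeta^{1-q})^{-1}w_1},
\]
clear denominators, and divide through by $j_0$ to obtain the equivalent polynomial relation
\[
w_1^{q+1}+\Bigl(\tfrac{1}{1-\zeta^{1-q}}+\tfrac{t-\zeta^q}{\zeta j_0}\Bigr)w_1+\tfrac{1}{j_0}=0.
\]
Setting $t=\eta$ reproduces \eqref{eq:w1reduction} exactly. For $k\geq 2$, I would substitute $t=\eta$ directly into the recursive relation and into the expression for $w_{k-1}^\nabla$ from Theorem~\ref{thm:thmB}(3): the variable $t$ appears only through the combination $(t-\zeta^{q^{k+1}})$ in the main equation and in $w_{k-1}^\nabla$, so the specialization is immediate and produces \eqref{eq:wkreduction} together with the displayed formula for $w_{k-1}^\nabla$.

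The step requiring the most care is verifying that this specialization is actually well-defined on the whole function field, i.e., that $\Phi^j$ admits good reduction at $I_\eta$ and that no denominators in the defining equations degenerate. This reduces to checking that the factors $1-\zeta^{1-q}$, $1-\zeta^{q^i-q^{i+1}}$ and $T=1/(t-\zeta^q)$ (and its $\sigma$-conjugates) remain finite and nonzero after $t\mapsto \eta$. The hypothesis $\eta\notin \{\zeta,\zeta^q\}\cup \mathbb{F}_q$ rules out $\eta=\zeta^{q^i}$ for every $i$, and the good-reduction property of $\Phi^j$ is then transparent from the explicit coefficients in Theorem~\ref{thm:Drinfeld}, which only involve $T$, $\zeta$ and $j$. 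Once this is in place, the modular tower of Theorem~\ref{thm:thmB} descends fiberwise to its $I_\eta$-reduction, the parameters $j_0,w_1,\dots,w_k$ remain a system of generators of $\bar\G_k$ over $\mathbb{F}_{q^2}$, and the defining relations reduce to \eqref{eq:w1reduction}–\eqref{eq:wkreduction}, completing the proof.
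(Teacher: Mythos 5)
Your proposal is correct and follows the same route the paper takes: the paper simply asserts the corollary as a consequence of Theorem~\ref{thm:thmB}, and the content of that consequence is precisely the specialization $t\mapsto\eta$ (equivalently $T\mapsto 1/(\eta-\zeta^q)$) together with good reduction of $\Phi^j$ at $I_\eta$, which is what you spell out. Your algebraic rearrangement of the $k=1$ case is just a rederivation of Equation~\eqref{eq:w1} with $1/T$ replaced by $t-\zeta^q$, and your non-degeneracy check (using $\eta\notin\{\zeta,\zeta^q\}\cup\mathbb{F}_q$ so that $T$, its conjugate, and the constants $1-\zeta^{q^i-q^{i+1}}$ stay finite and nonzero) is the right thing to verify.
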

In particular, the function field $ \bar{\G}_0 $ of $\x(1)/I_\eta$ is $ \mathbb{F}_{q^2}[j] $. 
 We already known that the supersingular points of $\x(1)/I_\eta$ are precisely $\mathbb{F}_{q^2}$-rational and satisfy the equality \eqref{eq:supersingular}
 by Lemma \ref{lem:supersingular}.
From Lemma \ref{lem:isogenyPhi}, we know 
$\delta_1 (\tau - w_1) $ is an isogeny from $ \bar\Phi^{j_0} $ to another $\zeta$-type Drinfeld module $ \bar\Phi^{\sigma; j_1} $,  where $ j_1 $ is the $ j $-invariant satisfies 
\begin{equation}\label{eq:j1}
    j_1 = j_0^q \frac{ \eta - \zeta}{\eta^q - \zeta}   \cdot \left( 1 +(1-\zeta^{q-1})   w_1 \right)  ^{q^2+1} . 
\end{equation}
It is clear that $ \bar\Phi^{\sigma; j_1} $ is also supersingular. Applying the same argument as in Lemma \ref{lem:supersingular} to $ \bar\Phi^{\sigma; j_1} $ yields that $ j_1 $ is also contained in $ \mathbb{F}_{q^2}$. 
It follows from \eqref{eq:j1} that 
\[
  \left( 1 +(1-\zeta^{q-1})   w_1 \right) ^{q^4-1} = \left( \frac{j_1}{j_0^q}\frac{\eta^q - \zeta}{ \eta - \zeta}\right)^{q^2-1} = 1. 
\]
This implies that $ (1 +(1-\zeta^{q-1}) w_1) $ lies in $ \mathbb{F}_{q^4} $, and so does $ w_1 $. Therefore, we understand that when $j_0$ verifies \eqref{eq:supersingular}, all the solutions $w_1$ of \eqref{eq:w1reduction} for  are contained in $\mathbb{F}_{q^4}$ and are distinct. 
Proceeding with the same procedure, we find that for given coordinates $ j_0, w_1,\cdots, w_{k-1} \in \mathbb{F}_{q^4} $, the solutions $w_k$ of \eqref{eq:wkreduction} form a subset of $\mathbb{F}_{q^4} $ with cardinality $q$. 
Hence following lemma follows.
\begin{lem}\label{lem:cardinality}
    The supersingular points of the modular curves $\x (I_\infty^k)/I_\eta$ are $ \mathbb{F}_{q^4} $-rational and their cardinality is  
    $
        (q+1)^2 q^{k-1}
    $. 
\end{lem}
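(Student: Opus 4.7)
The plan is to multiply together three counts: the number of supersingular $j_0$-values, the number of $w_1$ above each such $j_0$, and the number of $w_k$ above each tuple $(j_0,w_1,\dots,w_{k-1})$ for $k\geqslant 2$. The text already verifies (via the isogeny argument following Lemma \ref{lem:isogenyPhi} combined with Lemma \ref{lem:supersingular}) that $\mathbb{F}_{q^4}$-rationality propagates up the tower once one knows $j_0\in\mathbb{F}_{q^2}$ and $(1+(1-\zeta^{q-1})w_1)\in \mathbb{F}_{q^4}$, so the rationality claim reduces to confirming that each successive equation has all its solutions in $\mathbb{F}_{q^4}$ and yields the expected number of distinct points on $\x(I_\infty^k)/I_\eta$.

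First I would count supersingular $j_0$-values. Equation \eqref{eq:supersingular} has the form $(j_0+\alpha)^{q+1}=\beta$ with $\alpha\in\mathbb{F}_{q^2}$ and $\beta=(\eta^q-\eta)/(\zeta-\zeta^q)\in\mathbb{F}_{q^2}\setminus\{0\}$. Since the norm map $\mathbb{F}_{q^2}^\times\to\mathbb{F}_q^\times$ is surjective with kernel of order $q+1$ and $\beta\in\mathbb{F}_q^\times$, the equation has exactly $q+1$ solutions in $\mathbb{F}_{q^2}$, giving $q+1$ supersingular $j_0$.

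Next, for a fixed supersingular $j_0$, equation \eqref{eq:w1reduction} is a polynomial of $\tau$-degree $q+1$ in $w_1$. Because $\bar{\Phi}^{j_0}$ has good supersingular reduction, its $I_\infty$-torsion is free of rank one over $\A/I_\infty\cong \mathbb{F}_{q^2}$; the $q^2-1$ nonzero torsion elements partition into $(q^2-1)/(q-1)=q+1$ primitive $I_\infty$-torsion lines $G_1=\langle\beta_1\rangle$ and, via $w_1=\beta_1^{q-1}$, yield exactly $q+1$ distinct values of $w_1$. The rationality discussion immediately preceding the lemma shows every such $w_1$ lies in $\mathbb{F}_{q^4}$.

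For $k\geqslant 2$, I would proceed by induction. Given $(j_0,w_1,\dots,w_{k-1})$ corresponding to a primitive $I_\infty^{k-1}$-torsion, equation \eqref{eq:wkreduction} is of degree $q$ in $w_k$. The discussion in Section~\ref{sec:RestrictionPhi} (specifically the factorisation $\Xi^{\sigma^{k-1};j_{k-1}}(w_k)=\Xi_\nabla^{\sigma^{k-1};j_{k-1}}(w_k)\cdot(w_k-w_{k-1}^\nabla)$) shows the $q$ solutions of \eqref{eq:wkreduction} are exactly the $w_k$-values extending the given $I_\infty^{k-1}$-torsion to a primitive $I_\infty^k$-torsion, in bijection with the $q$ cosets of $G_{k-1}$ inside $G_k$. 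By the Frobenius-twist version of Lemma \ref{lem:isogenyPhi}, $\delta_k(\tau-w_k)$ is an isogeny from $\bar\Phi^{\sigma^{k-1};j_{k-1}}$ to $\bar\Phi^{\sigma^k;j_k}$, both supersingular, so the same argument that put $w_1$ in $\mathbb{F}_{q^4}$ places $w_k$ in $\mathbb{F}_{q^4}$. Multiplying, the total count is $(q+1)\cdot(q+1)\cdot q^{k-1}=(q+1)^2 q^{k-1}$, and all coordinates lie in $\mathbb{F}_{q^4}$.

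The main obstacle is the distinctness bookkeeping: I must be sure that each of the $q$ roots of \eqref{eq:wkreduction} is nonzero, lies in $\mathbb{F}_{q^4}$, and corresponds to a \emph{distinct} supersingular $\mathbb{F}_{q^4}$-rational point on $\x(I_\infty^k)/I_\eta$ (no two primitive torsions yielding the same pair $(\bar\Phi^{j_0},G_k)$ modulo the equivalence in Definition \ref{defn:equivalence}). This is handled by tracking that the map $(\phi,G_k)\mapsto(\phi,I_\infty G_k)$ realised on coordinates as $(j_0,w_1,\dots,w_k)\mapsto(j_0,w_1,\dots,w_{k-1})$ has degree exactly $q$ in the $k\geqslant 2$ stages and $q+1$ at the first stage, matching the covering degrees announced in Theorem \ref{thm:thmB} and giving the product $(q+1)^2 q^{k-1}$ without collapse.
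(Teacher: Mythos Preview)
Your proposal is correct and follows essentially the same approach as the paper: the paper's argument (given in the discussion immediately preceding the lemma rather than in a separate proof block) likewise shows that supersingular $j_0$ lie in $\mathbb{F}_{q^2}$, uses the isogeny relation \eqref{eq:j1} together with $j_0,j_1\in\mathbb{F}_{q^2}$ to force $(1+(1-\zeta^{q-1})w_1)^{q^4-1}=1$ and hence $w_1\in\mathbb{F}_{q^4}$, and then iterates the same step to place each $w_k$ in $\mathbb{F}_{q^4}$ with exactly $q$ choices at each stage. Your explicit norm-map verification that \eqref{eq:supersingular} has exactly $q+1$ solutions in $\mathbb{F}_{q^2}$ and your distinctness bookkeeping via the covering degrees of Theorem~\ref{thm:thmB} spell out points the paper leaves implicit, but the underlying strategy is the same.
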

\subsection{Proof of Theorem \ref{thm:thmC}}
The proof of Theorem \ref{thm:thmC} relies on Lemma \ref{lem:cardinality} and the equality given in \eqref{eq:genus}. We elaborate on the details below.
\begin{proof} 
    Let $ \mathbb{F}_{q^4}\bar{\G}_k $ denote the function field of $\x (I_\infty^k)/I_\eta$ over the finite field $\mathbb{F}_{q^4} $. We know the genus of a curve is invariant under taking reduction and constant field extension. So the genus $ g(\mathbb{F}_{q^4}\bar{\G}_k) $ of  $ \mathbb{F}_{q^4}\bar{\G}_k $ is given by \eqref{eq:genus}. From Lemma \ref{lem:cardinality}, the cardinality $ N(\mathbb{F}_{q^4}\bar{\G}_k) $ of the $ \mathbb{F}_{q^4} $-rational points on $\x (I_\infty^k)/I_\eta$ is greater than $
        (q+1)^2 q^{k-1}
        $. 
        
        We obtain that  
        \begin{equation}\label{eq:ihara}
           \lambda(\mathbb{F}_{q^4}\bar{\G}_k) = \lim_{k\to \infty} \frac{ N(\mathbb{F}_{q^4}\bar{\G}_k)  }{g(\mathbb{F}_{q^4}\bar{\G}_k) } \geqslant q^2 -1 .
        \end{equation}
        Since the upper bound of Ihara's quantity over $\mathbb{F}_{q^4} $ is $ (q^2 -1 )$, we know that the equality of \eqref{eq:ihara} holds. In other words, the function field tower $ \{ \mathbb{F}_{q^4}\bar{\G}_k \} $ is optimal. 
\end{proof}

%  {\color{red}
% \section*{Acknowledgements}
% We would like to thank the editor and the anonymous reviewer for their valuable suggestions and detailed comments that we have used to improve the quality of our manuscript.
% }

 \bibliographystyle{amsplain}
 \bibliography{drinfeld}

%  \begin{thebibliography}{99}
% \bibitem{AGW86}Anderson G. W.,  t-motives. \textit{Duke Math. J.},  \textbf{53}: 457–502, 1986.
\end{document}